\numberwithin{equation}{section}
\newcommand{\rG}{{\rm G}}
\newcommand{\Ad}{\mathrm{Ad}}
\renewcommand{\epsilon}{\varepsilon}
\newcommand{\dvol}{\mathop\mathrm{dvol}\nolimits}
\newcommand{\ev}{\mathrm{ev}}
\def\<{\mathopen{}\left<}
\def\>{\right>\mathclose{}}
\def\({\mathopen{}\left(}
\def\){\right)\mathclose{}}
\newtheorem{theorem}{Theorem}
\newtheorem{definition}{Definition}
\newtheorem{example}{Example}
\newtheorem{lemma}{Lemma}
\newtheorem{proposition}{Proposition}
\newtheorem{remark}{Remark}
\numberwithin{equation}{section}
\newcommand{\subjclass}[2][1991]{%
  \let\@oldtitle\@title%
  \gdef\@title{\@oldtitle\footnotetext{#1 \emph{Mathematics subject classification.} #2}}%
}
\newcommand{\keywords}[1]{%
  \let\@@oldtitle\@title%
  \gdef\@title{\@@oldtitle\footnotetext{\emph{Key words and phrases.} #1.}}%
}
\author{Goncalo Oliveira \\ Duke University}
\title{$G_2$-monopoles with singularities (examples)}
\date{September 2016}
\begin{document}
\maketitle
\footnotetext{\emph{MSC(2010)} 57R57 (Primary), 53C29, 53C38, 53C07} 
\footnotetext{\emph{Key words} Monopole, $G_2$-manifold}

%===============================================================================
\begin{abstract}
$G_2$-monopoles are solutions to gauge theoretical equations on $G_2$-manifolds. If the $G_2$-manifolds under consideration are compact, then any irreducible $G_2$-monopole must have singularities. It is then important to understand which kind of singularities $G_2$-monopoles can have. We give examples (in the noncompact case) of non-Abelian monopoles with Dirac type singularities, and examples of monopoles whose singularities are not of that type.\\
We also give an existence result for Abelian monopoles with Dirac type singularities on compact manifolds. This should be one of the building blocks in a gluing construction aimed at constructing non-Abelian ones.
\end{abstract}
%===============================================================================

%===============================================================================

\section{Introduction}

A $G_2$-manifold is a seven dimensional manifold $X^7$ equipped with a Riemannian metric $g$ whose holonomy lies in $G_2$. Similarly this can be encoded in a stable $3$ form $\varphi$, which determines a Riemannian metric whose holonomy is contained in $G_2$ if and only if $\varphi$ is both closed and coclosed. We let $\psi=\ast \varphi$, where $\ast$ is the Hodge $\ast$ operator associated with the metric $g$, and will refer to a $G_2$-manifold as the pair $(X^7, \varphi)$. A $G_2$-manifold is said to be irreducible if the holonomy of the Riemannian metric is equal to $G_2$.\\
We now introduce $G_2$-monopoles. For that, let $G$ be a compact semisimple Lie group and $P$ a principal $G$-bundle over $M$. Denote by $\mathfrak{g}_P$ the bundle associated with the adjoint representation and equip it with an $Ad$-invariant metric.

\begin{definition}\label{def:monopole}
A pair $(A, \Phi)$ where $A$ is a connection on $P$ and $\Phi \in \Omega^0(M, \mathfrak{g}_P)$ called an Higgs field is said to be a \textbf{$G_2$-monopole} (or simply monopole) if
$$\ast(F_A \wedge \psi) = d_A \Phi,$$
where $F_A$ is the curvature of $A$ and $d_A \Phi$ the covariant derivative of $\Phi$ with respect to the connection induced by $A$ on $\mathfrak{g}_P$.
\end{definition}

Most of the interest in monopoles comes from Donaldson and Segal's, \cite{Donaldson2009}, suggestion that it may be possible to define an invariant of $G_2$ manifolds by counting monopoles. The authors have further suggested that such monopoles may be somehow related to certain coassociative submanifolds. In the noncompact case evidence towards such a relation was found in \cite{Oliveira2014}.\\
If one is given a monopole $(A,\Phi)$, then the Bianchi identity and the fact that $\psi$ is closed imply that $d_A \ast d_A \Phi =0$. Hence, $\Delta_A \Phi=0$ and so 
$$\Delta \frac{\vert \Phi \vert^2}{2} = \langle \Phi, \Delta_A \Phi \rangle -\vert d_A \Phi \vert^2 \leq 0.$$
This means that $\vert \Phi \vert^2$ is subharmonic. Hence, if $M$ is compact, then $\vert \Phi \vert$ is constant and so $ d_A \Phi =0$ and $F_A \wedge \psi=0$, i.e. $A$ is a so called $G_2$-instanton, which is actually reducible in the case when $\Phi \neq 0$. Monopoles in $G_2$ manifolds may also be relevant for $M$-theory compactifications in manifolds of special holonomy. See for example, pages $78$--$84$ in \cite{Acharya04}, regarding type IIA string theory on $\mathbb{R}^4 / \Gamma_{ADE} \times S^3$.\\
This leads one to either let $X$ be noncompact, or allow the monopoles to have singularities. The goal of this paper is to initiate the study of monopoles on $G_2$-manifolds with a specified kind of singularities. There is a special class of singularities, which we call Dirac type singularities. The idea is to consider monopoles on the complement of a suitable submanifold of $X$. In a similar related work Yuanqui Wang in \cite{Wang2016} have recently studied $G_2$-monopoles with point like singularities.\\
Recall that an oriented real $4$-dimensional submanifold $N \subset X$ is said to be coassociative if it is calibrated by the $4$-form $\psi$, i.e. $\psi \vert_N = dvol_N$, where $dvol_N$ is the volume form associated with the restriction of the metric $g$ to $N$. Equivalently, a coassociative submanifold can be defined by $\varphi \vert_N =0$. Now let $N =N_1 \cup ... \cup N_k$, where the $N_i$ are disjoint, compact, connected and embedded coassociative submanifolds. Denote by $r_i=dist(\cdot,N_i):X \rightarrow \mathbb{R}^+_0$, the geodesic distance to $N_i$. 

\begin{definition}\label{def:DiracSingularity}
Let $N \subset X$ be as above and $M=X \backslash N$. Then a pair $(A, \Phi)$ is said to be a monopole on $X$ with \textbf{Dirac type singularities} along $N$ if the following hold
\begin{enumerate}
\item $(A, \Phi)$ is a monopole on $P$ over $M$ as in definition \ref{def:monopole}.
\item For each $i \in \lbrace 1,...,k \rbrace$, there is $k_i \in \mathbb{Z}$ such that
$$\lim_{r_i \rightarrow 0} r_i \vert \Phi \vert = \lim_{r_i \rightarrow 0} r_i^2 \vert F_A \vert = k_i,$$
and the monopole is then said to have \textbf{charge} $k_i$ along $N_i$.
\end{enumerate}
Moreover, if the gauge group is $G=S^1$ then we shall say the monopole is a \textbf{Dirac monopole}.
\end{definition}

Note that, according to this definition, a Dirac monopole is an Abelian monopole with Dirac type singularities. Moreover, a monopole which smoothly extends over $N$ to the whole $X$ has Dirac type singularities with charge $0$ along $N$.

\begin{remark}\label{rem:ev}
Notice that coassociative submanifolds are of codimension $3$. Let $\mathbb{S}^2_i$ be any fiber of the unit $2$-sphere bundle $\mathbb{S}^2(N_i)$ normal to $N_i$, and
$$ev_i: H^2(X \backslash N , \mathbb{Z}) \rightarrow \mathbb{Z}$$
the evaluation map on the class of $[\mathbb{S}^2_i] \in H_2(\mathbb{S}^2(N_i), \mathbb{Z})$. If $(A, \phi)$ is a Dirac monopole as in the definition above, with charge $k_i \in \mathbb{Z}$ along $N_i$. Then $ev_i(c_1(L))=k_i$, where $L$ is the complex line bundle associated to the $S^1$-bundle carrying the monopole $(A,\phi)$. 
\end{remark}

In this paper we give some examples of monopoles with singularities, most of which are of Dirac type. We shall now give an outlook of these results which serves as a guide to how the paper is organized. 

\subsection*{Main results and outlook of the singularity zoo}

We start in section \ref{sec:Dirac} with the most basic examples, motivating our definition \ref{def:DiracSingularity} of Dirac type singularities. These are the Dirac monopole on $\mathbb{R}^7$ and some nonAbelian monopoloes with Dirac type singularities, both of which are obtained by extending monopoles on $\mathbb{R}^3$ to $\mathbb{R}^7$ by making them translation invariant. Their singularities are located along $\lbrace 0 \rbrace \times \mathbb{R}^4$, which is coassociative. Then in \ref{sec:BS} we turn to the Bryant-Salamon $G_2$-manifolds \cite{Bryant1989}. After recalling the examples in \cite{Oliveira2014} of Dirac monopoles, in section \ref{sec:BSDiracType} we give the first examples of non-Abelian monopoles with Dirac type singularities on the Bryant-Salamon $G_2$-manifolds. These are invariant under a suitable group action and obtained by analyzing the resulting ODE's, which are found in \cite{Oliveira2014}.

\begin{theorem}\label{thm:BSDyracType}
Let $M= \mathbb{S}^4$ (resp. $\mathbb{CP}^2$), then there is an $SU(2)$(resp. $SO(3)$)-bundle $P \rightarrow \Lambda^2_-(M)$ equipped with a real two parameter family of irreducible monopoles with singularities along the zero section $M$. Moreover, the singularities are such that
\begin{equation}
\lim_{r \rightarrow 0 } r\vert \Phi \vert = \lim_{r \rightarrow 0 } r^2 \vert F_A \vert = 1.
\end{equation}
\end{theorem}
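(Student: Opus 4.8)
My plan is to exploit the large symmetry group of the Bryant--Salamon metric to reduce the monopole equation of Definition \ref{def:monopole} to a system of ordinary differential equations, and then to extract the desired family by analyzing that system near and away from the zero section. Recall that $\Lambda^2_-(M)\to M$ has three-dimensional fibres and that the Bryant--Salamon $G_2$-structure is invariant under a cohomogeneity-one action of $G=\Sp(2)$ when $M=\mathbb{S}^4$ (resp.\ $G=\SU(3)$ when $M=\mathbb{CP}^2$), induced from the isometric action on the base. The principal orbits are the level sets of the distance $r$ to the zero section; these are the twistor space $\mathbb{CP}^3$ (resp.\ the flag manifold $\SU(3)/T^2$), and $r$ is, up to a fixed reparametrization, the fibre radius. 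I would take $P$ to be the $G$-invariant $\SU(2)$- (resp.\ $\SO(3)$-) bundle whose adjoint bundle $\mathfrak{g}_P$ is the pullback $\pi^*\Lambda^2_-(M)$, carrying its canonical invariant connection $A_0$ and the tautological section $\xi$, which vanishes exactly on $M$ and whose norm is a fixed increasing function of $r$.

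Next I would write down the invariant ansatz. A $G$-invariant pair is determined by finitely many functions of $r$: the Higgs field is forced to be $\Phi=\phi(r)\,\widehat{\xi}$, where $\widehat{\xi}=\xi/|\xi|$ is the unit tautological section of $\mathfrak{g}_P$, and the connection is $A=A_0+a(r)\,\alpha$ for a canonical invariant $\mathfrak{g}_P$-valued one-form $\alpha$. Feeding this into the equation $\ast(F_A\wedge\psi)=d_A\Phi$ and using $G$-invariance collapses it to a first-order ODE system in $(\phi,a)$ whose coefficients are built from the Bryant--Salamon warping functions; these are exactly the reduced equations derived in \cite{Oliveira2014}, which I would simply quote. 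The Abelian (Dirac) solutions of \cite{Oliveira2014} correspond to the locus where $a$ is constant, and the monopoles I am after are the genuinely coupled solutions with $a$ nonconstant.

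I would then analyze this system. Its two integration constants produce the claimed real two-parameter family, and I would isolate the open subfamily whose members are defined for all $r\in(0,\infty)$, are smooth there, and do not degenerate to a reducible configuration; irreducibility is checked by verifying that $[\Phi,F_A]\not\equiv 0$, equivalently that $a$ is nonconstant. The crucial point is the behaviour as $r\to 0$: by a local analysis of the ODE at the singular endpoint I would show that each solution in the family has $\phi(r)\sim 1/r$ and that the curvature of $A$ grows like $1/r^2$, so that $r|\Phi|\to 1$ and $r^2|F_A|\to 1$, which is the charge-one Dirac-type singularity of Definition \ref{def:DiracSingularity}. The normalization to $1$ rather than a free constant reflects that near $r=0$ the fibre $\mathbb{R}^3$ dominates and the configuration converges to a standard charge-one Dirac monopole on each fibre.

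The main obstacle I anticipate is precisely this endpoint analysis together with global existence. I must show that the singular ODE at $r=0$ admits a two-parameter family of solutions with the prescribed leading asymptotics, rather than the solutions being forced to extend smoothly across $M$ (which would give charge $0$), and that these solutions persist on the whole half-line without the Higgs field blowing up or $a$ running off to a reducible limit. Controlling the coupled nonlinear terms well enough to pin down both the existence of the family and the exact coefficient $1$ in the two limits is the heart of the argument; the identification of the symmetry-reduced equations and the verification of irreducibility are comparatively routine given \cite{Oliveira2014}.
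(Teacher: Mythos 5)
Your outline follows exactly the paper's strategy: symmetry reduction of the monopole equation to the ODEs already derived in \cite{Oliveira2014}, followed by an analysis of the reduced system near $r=0$ and globally. However, everything you defer in your final paragraph as "the main obstacle" and "the heart of the argument" is precisely the entire content of the paper's proof, so as it stands the proposal proves nothing. Three concrete pieces are missing. First, the two-parameter family is not obtained from abstract "integration constants": the paper prescribes initial data $\phi(t_0)=0$, $b(t_0)=b_0\in(0,1)$ at an interior point and parametrizes the family by $(t_0,b_0)\in\mathbb{R}^+\times(0,1)$; this particular choice is what drives the rest. Second, your plan to "isolate the open subfamily defined for all $r$" presupposes that such solutions exist at all; the paper instead proves that \emph{every} solution with the above data is global and irreducible by trapping $b$ in $(0,1)$: real analyticity shows $b$ can never vanish (differentiating $\dot b=2\phi b$ forces all derivatives of $b$ to vanish at a zero), and the second-order equation $b\ddot b=\dot b^2+\frac{b^2}{h^2}(b^2-1)$ shows critical points of $b$ below $1$ are maxima and above $1$ are minima, so $b$ cannot cross $1$. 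Third, the exact constant $1$ in the limits requires the bootstrap at $r\to 0$: since $b\in(0,1)$, $\phi$ is decreasing, hence positive on $(0,t_0)$, hence $b$ is increasing there, giving $-c_1/t^2\le\dot\phi\le -c_1^{-1}/t^2$ and so $\phi\asymp 1/t$; feeding this into $\dot b=2\phi b$ gives $b\le c\,t^{2/c_1}\to 0$, and reinserting into equation \ref{eq:sODE1} upgrades the estimate to $\phi(t)=\frac{1}{2t}+O(1)$. This loop is what simultaneously pins the coefficient and rules out the alternative you rightly worry about — solutions with $b\to 1$ that extend smoothly across the zero section with charge $0$ — but your proposal contains no mechanism to exclude it.

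Two smaller inaccuracies are worth flagging. In the reduced system the reducible (Abelian) locus is $b\equiv 0$ — the off-diagonal field vanishing identically — not "$a$ constant," and this is why the trapping $b\in(0,1)$ delivers irreducibility for free. Also, be careful to quote the correct reduced system: for the bundle in this theorem the ODEs are \ref{eq:sODE1}--\ref{eq:sODE2}, with the factor $b^2-1$, whereas the other invariant bundle over $\Lambda^2_-(\mathbb{CP}^2)$ yields the system \ref{eq:ODE1}--\ref{eq:ODE2} with $1+b^2$, whose solutions blow up exponentially (theorem \ref{prop:Ugly}). The sign $b^2-1<0$ is exactly what makes a Dirac-type, rather than worse-than-Dirac, singularity possible, so the distinction is not cosmetic.
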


Then, in section \ref{sec:Ugly} by analyzing ODE's obtained for monopoles on another bundle, we find examples of monopoles with singularities which are worse than of Dirac type. See theorem \ref{prop:Ugly} for a more precise statement, which we state here as

\begin{theorem}\label{thm:Ugly}
There are $SO(3)$ bundles $P_{\alpha_1}$, $P_{\alpha_3}$ over $\Lambda^2_-(\mathbb{CP}^2) \backslash \mathbb{CP}^2$, on which there is a real two parameter family of $SU(3)$-invariant, irreducible $G_2$-monopoles with gauge group $SO(3)$ (possibly not defined on the whole of $\Lambda^2_-(\mathbb{CP}^2) \backslash \mathbb{CP}^2$). This family of monopoles is such that
\begin{itemize}
\item For any open neighborhood $U$ of the zero section, there exists an open set $V \supset \overline{U}$ such that a member of this family of monopoles is defined on $V \backslash \overline{U}$.

\item If in this family there is a monopole $(A, \Phi)$ defined in a neighborhood of the zero section, with the zero section removed, then $\Phi$ diverges exponentially along it.
\end{itemize}
\end{theorem}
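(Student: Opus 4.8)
The plan is to exploit the $SU(3)$-symmetry to reduce the monopole equation of Definition \ref{def:monopole} to a system of ordinary differential equations in the radial variable $r$ (the geodesic distance to the zero section $\mathbb{CP}^2$), and then to study that system qualitatively near its singular endpoint. First I would recall, from the cohomogeneity-one description of the Bryant--Salamon metric on $\Lambda^2_-(\mathbb{CP}^2)$ together with the classification of $SU(3)$-invariant connections on the bundles $P_{\alpha_1}$ and $P_{\alpha_3}$, the precise form of the reduced equations derived in \cite{Oliveira2014}. An $SU(3)$-invariant pair $(A,\Phi)$ is encoded by finitely many functions of $r$ alone; substituting into $\ast(F_A\wedge\psi)=d_A\Phi$ produces a coupled first-order system whose coefficients are smooth for $r>0$ but degenerate as $r\to 0$, because the generic orbit collapses onto the zero section there.

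For the first bullet point I would argue by elementary ODE theory. Away from the zero section the reduced system has smooth coefficients, so Picard--Lindel\"of gives, for each admissible choice of initial data at a radius $r_0>0$, a unique solution on a maximal open interval $(r_-,r_+)\ni r_0$; the two real parameters in the statement are exactly these initial values, the remaining data being fixed by the invariant ansatz and by the gauge. Given any open neighborhood $U$ of the zero section, I choose $r_0$ so that the orbit $\{r=r_0\}$ lies outside $\overline U$; the solution then exists on some interval $(r_0-\epsilon,r_0+\epsilon)$, and assembling the corresponding orbits yields a genuine monopole defined on an open annular region $V\setminus\overline U$ with $V\supset\overline U$. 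Irreducibility for generic parameters is verified by checking that the configuration does not reduce to an invariant Abelian sub-configuration, equivalently that the connection functions are genuinely non-constant along the solution so that $d_A\Phi\not\equiv 0$.

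The substance is the second bullet point: that any solution persisting on a punctured neighborhood of the zero section forces $|\Phi|$ to diverge exponentially as $r\to 0$. Here I would carry out the asymptotic analysis of the reduced system at the singular endpoint $r=0$. The decisive contrast with the Dirac-type examples of Theorem \ref{thm:BSDyracType} --- where $r=0$ is a regular (Fuchsian) singular point whose indicial exponent yields the polynomial rate $|\Phi|\sim 1/r$ --- is that on $P_{\alpha_1}$ and $P_{\alpha_3}$ the leading singular term in the equation governing the Higgs component has the form $\dot h=(c/r^2)\,h+(\text{lower order})$ with $c\neq 0$, so that $r=0$ is an \emph{irregular} singular point. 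The dominant balance integrates to $|\Phi|\sim\exp(\mp c/r)$, which grows faster than any power of $1/r$; I would make this rigorous by a barrier argument, bounding $|\Phi|$ from below by a subsolution of the form $\exp(|c|/r)$ once $r$ is small, using the subharmonicity of $|\Phi|^2$ recorded in the introduction to fix signs and to exclude the spurious decaying mode.

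I expect the main obstacle to be precisely this last asymptotic step: showing that the irregular-singular structure is genuinely present for the invariant connections on $P_{\alpha_1}$ and $P_{\alpha_3}$ (that the coefficient $c$ does not vanish), and that no cancellation with the connection-dependent nonlinear terms can degrade the exponential growth to a polynomial one. Ruling out the polynomially divergent, Dirac-type behavior amounts to proving that the set of solutions approaching $r=0$ with finite or $1/r$-bounded $\Phi$ is empty for these bundles, and it is here that the explicit form of the reduced equations from \cite{Oliveira2014} must enter in an essential way.
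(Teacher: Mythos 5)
Your reduction to an invariant ODE system and your treatment of the first bullet point (local existence via Picard--Lindel\"of at a radius $r_0$ chosen outside $\overline U$, with the two real parameters given by the initial data there, and reducibility corresponding to the vanishing of the connection function) coincide with what the paper does. The genuine gap is in your mechanism for the second, main, bullet point. You posit that the reduced equation governing the Higgs component has a \emph{linear} irregular singular point, $\dot h=(c/r^2)h+(\text{lower order})$ with $c\neq 0$, whose dominant balance gives $|\Phi|\sim e^{\pm c/r}$, and you yourself flag that verifying $c\neq 0$ is the main obstacle. But no such linear term exists. The reduced system on $P_{\alpha_1}$, $P_{\alpha_3}$ is (equations \ref{eq:ODE1}--\ref{eq:ODE2} of the paper)
\begin{equation*}
\dot{\phi} \;=\; \frac{1 + b^2}{2h^2}, \qquad \dot{b} \;=\; 2b\phi ,
\end{equation*}
with $h(t)=t+O(t^3)$ near $t=0$. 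The right-hand side of the $\phi$-equation contains no $\phi$ at all, so there is no indicial or irregular-singular analysis of a linear equation for the Higgs field to be done; and the $b$-equation has as its ``coefficient'' the unknown $\phi$ itself. The only structural difference from the Dirac-type case of Theorem \ref{thm:BSDyracType} is a sign: $1+b^2$ in place of $b^2-1$, reflecting that these bundles do not extend over the zero section.

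The actual source of the exponential divergence is a nonlinear bootstrap between the two equations, not a dominant balance for a linear problem. Since $1+b^2\geq 1$, integrating the first equation backwards from $R$ gives only the polynomial (Dirac-like) bound $\phi(r)\leq \phi(R)+c_1-\frac{1}{2r}$; feeding this into $\dot b=2b\phi$ yields $b^2(r)\gtrsim r^{-2}$, which improves the bound to $\phi(r)\lesssim -c/r^{3}$; feeding that back in gives $b^2(r)\gtrsim e^{c_5/r^2}$, and one final pass through the $\phi$-equation forces $\phi$ itself to diverge at a rate comparable to $e^{c_5/r^2}$. In particular the rate established in the paper is $e^{\delta/r^2}$ (Theorem \ref{prop:Ugly}), strictly faster than the $e^{c/r}$ your ansatz would produce. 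Note also that after one pass the divergence is only $1/r$ and after two passes only $1/r^{3}$, so without running the coupled iteration you cannot exclude that the singularity remains of polynomial (even Dirac) type; a barrier built from subharmonicity of $|\Phi|^2$ will not substitute for this, since the monotone quantity driving everything is $\phi\to-\infty$ through the coupling $\dot b = 2b\phi$, not a comparison for $|\Phi|^2$ alone.
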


Our definition of Dirac type monopoles deliberately excludes exponentially diverging singularities. However, we conjecture that if the monopole $(A,\Phi)$ lives on a $SO(3)$-bundle $P$ over $X \backslash N$, induced from restricting a bundle $\tilde{P}$ over $X$, then the singularities of $(A,\Phi)$ are at most of Dirac type.\\
%Our definition of Dirac type type monopoles deliberately excludes these kind of singularities. However, it remains open the problem of determining whether there is a topological condition on the bundle $P$ supporting a monopole with singularities, that would guarantee singularities as those in theorem \ref{thm:Ugly} cannot occur. In fact, we conjecture that if the monopole $(A,\Phi)$ lives on a $SO(3)$-bundle $E$ over $X \backslash N$, induced from restricting a bundle $\tilde{E}$ over  $X$, then the singularities of $(A,\Phi)$ are at most of Dirac type.\\
The final section \ref{sec:General} gives an abstract construction of Dirac monopoles on a compact $G_2$-manifold. Namely we shall prove that

\begin{theorem}\label{th:Dirac}
Let $(X, \varphi)$ be a compact, irreducible $G_2$ manifold, i.e. with full holonomy $G_2$. Let $N = N_1 \cup \ldots \cup N_k$ be a disjoint union of compact, connected and embedded coassociative submanifolds of $(X, \varphi)$ and $M=X \backslash N$. Then, for all $\alpha \in H^2(M , \mathbb{Z})$, there is Dirac monopole $(A, \Phi)$ on $X$, defined on a line bundle $L \rightarrow M$ with $c_1(L)=\alpha$. Moreover, such monopole has charge $\ev_i(\alpha)$ along each $N_i$.
\end{theorem}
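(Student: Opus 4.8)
The plan is to reduce the existence of an Abelian (Dirac) monopole to a linear problem on the compact manifold $X$, exploiting the fact that for $G=S^1$ the monopole equation decouples nicely. For an $S^1$-bundle with connection $A$ and Higgs field $\Phi\in\Omega^0(M,\mathfrak{g}_P)$, after identifying $\mathfrak{g}_P\cong i\R$ (so $\Phi$ is essentially a function times $i$, and $F_A$ is an imaginary-valued closed $2$-form), the monopole equation $\ast(F_A\wedge\psi)=d_A\Phi$ becomes $\ast(F_A\wedge\psi)=d\Phi$. The strategy is: (i) fix a complex line bundle $L\to M$ with $c_1(L)=\alpha$; (ii) choose a background connection $A_0$ on $L$ whose curvature has the correct singular behaviour near each $N_i$, prescribing the charges $\ev_i(\alpha)$; (iii) solve for the monopole by perturbing $A_0$ and finding $\Phi$, reducing everything to a Hodge-theoretic/elliptic existence statement for a harmonic representative with prescribed singularities.

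\textbf{First I would} analyse the equation more carefully. Writing $A=A_0+a$ with $a\in\Omega^1(M,i\R)$, the curvature is $F_A=F_{A_0}+da$, and the monopole equation reads $\ast\big((F_{A_0}+da)\wedge\psi\big)=d\Phi$. Since $\psi$ is closed and coclosed on a torsion-free $G_2$-manifold, the operator $\beta\mapsto\ast(\beta\wedge\psi)$ on $2$-forms is, up to sign, a projection-type operator onto a $G_2$-irreducible summand $\Omega^2_{14}$ (the $\mathfrak{g}_2$-part), and it annihilates the $\Omega^2_7$ piece. The key algebraic fact I would use is that a closed $2$-form $F$ satisfies $\ast(F\wedge\psi)=d\Phi$ for some $\Phi$ precisely when its harmonic-type component lies in the right subspace; in particular the cleanest solutions arise when $F$ is \emph{coclosed} and lies in $\Omega^2_{14}$, forcing $d\Phi=0$ and hence $\Phi$ constant away from the singularities, with the divergence of $\Phi$ produced entirely by the Dirac-type boundary condition near $N$. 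Thus I would aim to produce, in the class $\alpha$, a harmonic (closed and coclosed) $2$-form $F$ on $M$ representing the image of $\alpha$, with the prescribed Dirac singularity $|F|\sim k_i r_i^{-2}$ along each $N_i$, and then read off $\Phi$ by integrating the relation near the singular set.

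\textbf{The hard part will be} controlling the behaviour near the coassociative $N_i$: one must build a curvature form that is genuinely the curvature of a connection on a line bundle with $c_1=\alpha$ while simultaneously matching the Dirac asymptotics $\lim_{r_i\to0}r_i^2|F_A|=k_i=\ev_i(\alpha)$ and $\lim_{r_i\to0}r_i|\Phi|=k_i$. The natural model is the standard Dirac monopole on the normal $\R^3$-fibres (codimension $3$, as noted in Remark~\ref{rem:ev}), whose flux through the linking $\mathbb{S}^2_i$ equals $\ev_i(\alpha)$; the analytic task is to glue this local model to a global harmonic form. I would therefore proceed by: choosing a smooth connection $A_0$ on $L$ over $M$ modelled on the Dirac connection in a tubular neighbourhood of each $N_i$, so that $F_{A_0}$ already has the correct singular leading term and flux; then solving the linear elliptic problem for the correction $a$ and the Higgs field $\Phi$ on the weighted Sobolev/H\"older spaces adapted to the conical geometry transverse to $N$, using Hodge theory on the compact manifold $X$ to find a genuinely harmonic representative in weighted spaces. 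The main obstacle is precisely this weighted elliptic theory: establishing that the relevant Laplacian (equivalently $d+d^\ast$ acting on $\Omega^1\oplus\Omega^0$) is Fredholm with the right cokernel on the chosen weighted spaces, so that the correction exists and does not spoil the prescribed charges. Once Fredholmness and the absence of obstruction (guaranteed by choosing the weight in the correct range between the indicial roots of the transverse cone) are in place, the charge statement $k_i=\ev_i(\alpha)$ follows from Stokes' theorem / the flux computation on $\mathbb{S}^2_i$, and the theorem is established.
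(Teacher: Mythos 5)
There is a genuine gap, and it sits at the center of your argument. Your ``key algebraic fact'' is incoherent: wedging with $\psi$ annihilates exactly $\Omega^2_{14}$, so if you arrange the curvature $F$ to be coclosed and pointwise of type $\Omega^2_{14}$, the monopole equation forces $d\Phi=0$; since $M$ is connected, $\Phi$ is then a genuine constant, and no ``Dirac-type boundary condition near $N$'' can make a constant function satisfy $\lim_{r_i\to 0} r_i|\Phi|=\ev_i(\alpha)\neq 0$. What that ansatz produces is a singular $G_2$-instanton with zero charges, not the claimed Dirac monopole. The solution you need necessarily has $\pi_7(F)\neq 0$, coupled to $d\Phi$ through the isomorphism $\Omega^2_7\cong\Omega^1$, and this coupling is precisely what your scheme never resolves: even your fallback step, ``produce a harmonic $2$-form in the class $\alpha$ with the right singularities and read off $\Phi$ by integrating the relation,'' presupposes that the $1$-form $\ast(F\wedge\psi)$ is exact (or at least closed) on the incomplete manifold $M$, which is the nontrivial point, not a formality.

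The second gap is that the obstruction analysis is only asserted, and the telltale sign is that you never use the hypothesis that $X$ is irreducible (full holonomy $G_2$), which the paper uses twice and essentially. The paper's proof is not perturbative: it solves $\Delta H=\sum_i \ev_i(\alpha)\,\delta_{N_i}$ for a current $H$ (solvable because step 1's exact sequence gives $\sum_i\ev_i(\alpha)[N_i]=0$), sets $F'=d^*H$, and shows $\ast(F'\wedge\psi)=d\phi$ with $\phi$ a multiple of the $\pi_1$-component of $H$; here irreducibility enters through the Bochner formula to kill $\pi_7(H)$ (coassociativity of $N$ gives $\delta_{N_i}\wedge\varphi=0$, so $\pi_7(H)$ is harmonic, hence zero), and enters again to show $H^2_7(X,\mathbb{R})=0$, so that the harmonic representative $b$ of the discrepancy class $[F']-\alpha\in H^2(X,\mathbb{R})$ satisfies $b\wedge\psi=0$ and can be added to fix the integral class without breaking the equation. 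In your weighted-Fredholm setup these same facts must reappear as statements about the cokernel of the linearized operator and about which classes can be realized by admissible curvature forms; until you identify that cokernel and verify the error term is orthogonal to it, choosing ``the weight between the indicial roots'' is a hope, not a proof.
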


\begin{remark}
For any $(n_1,\ldots , n_k) \in \mathbb{Z}^k$ such that $\sum_{i=1}^k n_i [N_i] =0 \in H_4(X, \mathbb{Z})$ there does exist $\alpha \in H^2(M, \mathbb{Z})$ with $\ev_i(\alpha)=n_i$. Hence, for any such $(n_1, \ldots , n_k)$ we can construct a Dirac monopole with charge $n_i$ along $[N_i]$.
\end{remark}

It remains open the problem of constructing non-Abelian monopoles with Dirac type singularities on a compact $G_2$-manifold. Studying such monopoles may lead to a numerical invariant of compact $G_2$ manifolds, possibly related to their coassociative geometry. The author expects theorem \ref{th:Dirac} to provide one of the building blocks of such a construction, and intends to come back to this problem in future work.

\section*{Acknowledgments}

I would like to thank Robert Bryant, Edgar Costa and Mark Stern for discussions. I would also like to thank an anonymous referee for very helpful comments and suggestions, in particular for having detected an error in a first version of the proof of proposition $1$. I would also like to thank the Max-Planck-Institute in Bonn, for hosting me during the summer of 2015.

\section{Singular monopoles on $\mathbb{R}^7$}\label{sec:Dirac}

We shall consider $\mathbb{R}^7 = \mathbb{R}^4_y \times \mathbb{R}^3_x$ with the flat $G_2$-structure, for which
$$\psi = dy^0 \wedge dy^1 \wedge dy^2 \wedge dy^3 - \frac{1}{2} \epsilon_{ijk} dx^{i}\wedge dx^{j} \wedge \Omega_k,$$
where $\Omega_i= dy^0 \wedge dy^i -  \frac{1}{2} \epsilon_{ijk} dy^j \wedge dy^k$ is a basis for $\Lambda^2_- \mathbb{R}^4$, i.e. the anti-self-dual $2$-forms in $\mathbb{R}^4$.

\begin{lemma}\label{lem:reduction}
Monopoles $(A, \Phi)$ on $\mathbb{R}^7$ invariant under translations in the $\mathbb{R}^4_y$ directions are in one to one correspondence with solutions to the following equations in $\mathbb{R}^3$
\begin{eqnarray}\nonumber
d_a \Phi - [B, \Psi] & = &  \ast_3  ( F_a - \frac{1}{2} [B \wedge B] ) \\ \nonumber
d_a \Psi + [B, \Phi] & = &  \ast_3 d_a B \\ \nonumber
d_a^* B + [\Psi, \Phi] & = & 0.
\end{eqnarray}
where $d_a$ is a connection on a principal bundle $P$, $\Phi,\Psi \in \Omega^0(\mathbb{R}^3 , \mathfrak{g}_P)$ called Higgs fields, $B \in \Omega^1(\mathbb{R}^3 , \mathfrak{g}_P)$ and $\ast_3$ the Hodge-star associated with the Euclidean metric in $\mathbb{R}^3$. In particular, setting $B = \Psi =0$, the equations above reduce to
$$d_a \Phi =  \ast_3  F_a.$$
Hence any Bogomolnyi monopole on $\mathbb{R}^3$ can be lifted to a $G_2$-monopole on $\mathbb{R}^7$.
\end{lemma}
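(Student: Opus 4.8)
The plan is to make the $\mathbb{R}^4_y$-invariance explicit and then decompose the single seven-dimensional equation $\ast(F_A\wedge\psi)=d_A\Phi$ into its components along $dx^i$, $dy^0$ and $dy^i$. First I would write an invariant connection as $A=a+\Psi\,dy^0+\sum_{i=1}^3 B_i\,dy^i$, where $a$ is a connection on a bundle over $\mathbb{R}^3_x$ and $\Psi,B_1,B_2,B_3\in\Omega^0(\mathbb{R}^3,\mathfrak{g}_P)$ are the $y$-components of $A$, all independent of $y$; I then set $B=\sum_i B_i\,dx^i\in\Omega^1(\mathbb{R}^3,\mathfrak{g}_P)$ and keep the invariant Higgs field $\Phi\in\Omega^0(\mathbb{R}^3,\mathfrak{g}_P)$ as is. Since $\mathbb{R}^4_y$-invariant fields on $\mathbb{R}^7=\mathbb{R}^4_y\times\mathbb{R}^3_x$ are the same as fields on the quotient $\mathbb{R}^3_x$, the assignment $(A,\Phi)\mapsto(a,\Phi,\Psi,B)$ is already a bijection at the level of fields; the content of the lemma is that the monopole equation is equivalent to the three displayed equations, and because the decomposition into $dx^i$-, $dy^0$- and $dy^i$-parts is a term-by-term equivalence, establishing this equivalence simultaneously proves the claimed one-to-one correspondence.

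Next I would compute both sides. As nothing depends on $y$, the curvature and the covariant derivative split as
\[
F_A=F_a+(d_a\Psi)\wedge dy^0+(d_aB_i)\wedge dy^i+[\Psi,B_i]\,dy^0\wedge dy^i+\tfrac12[B_i,B_j]\,dy^i\wedge dy^j,
\]
\[
d_A\Phi=d_a\Phi+[\Psi,\Phi]\,dy^0+[B_i,\Phi]\,dy^i,
\]
so the right-hand side of the monopole equation is already split, with $dx^i$-, $dy^0$- and $dy^i$-parts equal to $d_a\Phi$, $[\Psi,\Phi]$ and $[B_i,\Phi]$ respectively.

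The heart of the argument is to compute $\ast(F_A\wedge\psi)$ and match it against this. Using $\tfrac12\epsilon_{ijk}\,dx^i\wedge dx^j=\ast_3 dx^k$, I would rewrite $\psi=dy^0\wedge dy^1\wedge dy^2\wedge dy^3-(\ast_3 dx^k)\wedge\Omega_k$, wedge it against each term of $F_A$, discard every monomial with a repeated index, and track the orientation sign needed to reorder each survivor to a fixed volume form before applying $\ast$. The three components should then assemble as follows: the $dx^i$-part collects $F_a$ (paired with $dy^0\wedge dy^1\wedge dy^2\wedge dy^3$) together with $\tfrac12[B_i,B_j]$ and $[\Psi,B_i]$ (paired with the anti-self-dual part of $\Omega_k$), giving $\ast_3(F_a-\tfrac12[B\wedge B])$ and the bracket $[B,\Psi]$, i.e. equation (I); while the $dy^0$-part and the $dy^i$-part both draw on the single term $(d_aB_i)\wedge dy^i$, whose pairing with the $dy^0\wedge dy^k$ piece of $\Omega_k$ produces the codifferential $d_a^*B$ (equation (III)) and whose pairing with the $dy^l\wedge dy^m$ piece produces the curl $\ast_3 d_aB$, this last combining with the $d_a\Psi$ coming from $(d_a\Psi)\wedge dy^0$ to give equation (II).

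The main obstacle will be precisely this Hodge-star accounting: because each $\Omega_k$ mixes the distinguished direction $dy^0$ with the spatial $dy^l\wedge dy^m$, a single term of $F_A$ can feed several components of the answer — the clearest instance being $(d_aB_i)\wedge dy^i$, which splits into the divergence of equation (III) and the curl of equation (II) — and it is exactly the relative orientation signs that produce the crucial minus sign in $F_a-\tfrac12[B\wedge B]$ and fix the signs of the bracket terms. Once the three equations are verified, the final assertion is immediate: setting $B=\Psi=0$ (equivalently, letting the $y$-components of $A$ vanish and pulling back a pair $(a,\Phi)$ from $\mathbb{R}^3$) collapses (II) and (III) to $0=0$ and reduces (I) to the Bogomolnyi equation $d_a\Phi=\ast_3 F_a$, so every Bogomolnyi monopole on $\mathbb{R}^3$ lifts to a $G_2$-monopole on $\mathbb{R}^7$.
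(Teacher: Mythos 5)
Your proposal is correct and follows essentially the same route as the paper: split the translation-invariant connection into a $3$-dimensional connection plus $y$-components, compute $F_A$ and $d_A\Phi$, and decompose $\ast(F_A\wedge\psi)=d_A\Phi$ into its $dx^i$-, $dy^0$- and $dy^i$-parts, which yield equations (I), (III) and (II) respectively. Two bookkeeping points to correct when you carry out the computation: the pairing of $(d_aB_i)\wedge dy^i$ with the $dy^0\wedge dy^k$ piece of $\Omega_k$ in fact produces the curl term of equation (II), while its pairing with the $\frac{1}{2}\epsilon_{klm}\,dy^l\wedge dy^m$ piece produces the divergence of equation (III) (you have these swapped), and with the paper's conventions the stated signs force $\Psi$ to be \emph{minus} the $dy^0$-component of $A$ (the paper sets $B=b^i dx^i$ and $\Psi=-b^0$).
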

\begin{proof}
If the bundle $P$ is pulled back from $\mathbb{R}^3$, then any connection on $P \rightarrow \mathbb{R}^3$ can be written as $d_A= d_a + b$, where $d_a$ is the pullback of a connection from $\mathbb{R}^3$ and $b \in \Omega^0(\mathbb{R}^7, \Lambda^1 \mathbb{R}^4_y \otimes \mathfrak{g}_P)$. If we further suppose that the pair $(A, \Phi)$ is invariant by translations along the $\mathbb{R}^4$ directions then $b$ is constant along the $y$ directions, i.e. we could interpret it as being the pullback of $b \in \Omega^0(\mathbb{R}^3_x, \Lambda^1 \mathbb{R}^4_y \otimes \mathfrak{g}_P)$ and the same holds for $\Phi$ which we can interpret as being pullback from $\Phi \in \Omega^0(\mathbb{R}^3_x, \mathfrak{g}_P)$. Then, $F_A = F_a + d_a b + \frac{1}{2}[b \wedge b]$ and $d_A \Phi = d_a \Phi + [b, \Phi]$ and the $G_2$-monopole equations turn into
\begin{eqnarray}\nonumber
0 & = &  \ast \left( (F_a + d_a b + \frac{1}{2}[b \wedge b]) \wedge \psi \right) - d_a \Phi - [b, \Phi] .
\end{eqnarray}
Splitting this equation into its components in $\Lambda^1 \mathbb{R}^4_y \otimes \mathfrak{g}_P$ and $\Lambda^1 \mathbb{R}^3_x \otimes \mathfrak{g}_P$ we get
\begin{eqnarray}\nonumber
d_a \Phi & = &  \ast_3  F_a + \gamma( b , b ) \\ \nonumber
[b, \Phi] & = &  \ast( d_a b \wedge \psi ).
\end{eqnarray}
where $\ast_3$ denotes the $3$-dimensional Hodge star operator in the Euclidean $\mathbb{R}^3_x$ and $\gamma(\cdot , \cdot)$ is a certain multilinear pairing. Using the notation $b=b^0 dy^0 + \sum_{i=1}^3 b^i dy^i$, where $b^0, b^i \in \Omega^0(\mathbb{R}^3_x , \mathfrak{g}_P)$ we have $\gamma (b,b) = ( \epsilon_{ijk} [b^i,b^j]- [b^0, b^k]) dx^k$ and the equations above turn into
\begin{eqnarray}\nonumber
d_a \Phi & = &  \ast_3  F_a - ( \epsilon_{ijk} [b^i,b^j]- [b^0, b^k]) dx^k \\ \nonumber
[b^0, \Phi] & = &  -\sum_{i=1}^3 \nabla^a_{\partial_i} b^i \\ \nonumber
[b^k, \Phi] & = &  \nabla^a_{\partial_k} b^0  + \epsilon_{ijk} \nabla^a_{\partial_i} b^j,
\end{eqnarray}
and $\nabla^a_{\partial_k}$ denotes covariant differentiation with respect to $d_a$ in the direction of $\frac{\partial}{\partial x^k}$. It is now easy to see that by setting $B=b^i dx^i$ and $\Psi = - b^0$ we obtain the equations in the statement.
\end{proof}

\begin{remark}
There is an elegant way to write the equations in lemma \ref{lem:reduction}. In fact we can define the complexified connection and Higgs field $\nabla_A = \nabla_a + i B$ and $\Gamma = \Phi + i \Psi \in \Omega^0(\mathbb{R}^7 , \mathfrak{g}_P^{\mathbb{C}})$, where $ \mathfrak{g}_P^{\mathbb{C}} = \mathfrak{g}_P \otimes_{\mathbb{R}} \mathbb{C}$ is the complexified adjoint bundle. Then
$$F_A= F_a  + i d_a B -  \frac{1}{2}[B \wedge B] \ , \ d_A \Gamma = d_a \Phi + i d_a \Psi + i [B, \Phi] - [B, \Psi].$$
and it is straightforward to see that the first and third equations in lemma \ref{lem:reduction} are, respectively, the real and imaginary parts of the complexified Bogomolnyi equation
\begin{equation}\label{eq:ComplexMonopole}
d_A \Gamma = \ast F_A.
\end{equation}
As for the second equation in the lemma, it can be written as $d_a^* (iB) - \frac{1}{2} [\Gamma , \overline{\Gamma}]=0$.\\
In fact the equations above form a system of elliptic equations that can be written in any $3$ manifold. In $\mathbb{R}^3$ it seems possible that one can equip the moduli space of solutions with an hyperk\"ahler structure.\\
It would be interesting to check if there is a Kempf-Ness type result relating the moduli space of solutions to the total system of equations, and that of solutions to equation \ref{eq:ComplexMonopole} (modulo the action of the complexified gauge group, in this latter case).
\end{remark}

Now we shall use lemma \ref{lem:reduction} to start with a singular monopole on $\mathbb{R}^3$ and lift it to a singular monopole on $\mathbb{R}^7$.

\begin{example}
We shall consider $G= \mathbb{S}^1$. Then we can think of $A$ as a connection on a complex line bundle and $\Phi$ as a function, as the adjoint bundle is trivial. The Bogomolnyi equations turn into $\ast d \Phi = F_A$ and the Bianchi identity $d F_A=0$ implies that $\Delta \Phi =0$. To search for a monopole with a singularity at the origin we consider an harmonic function on $\mathbb{R}^3_x \backslash \lbrace 0 \rbrace$ decaying at infinity. These are of the form $\Phi = m -\frac{k}{\vert x \vert}$, where $m,k \in \mathbb{R}$, and we define
$$F_A = \ast d \Phi =  \frac{k}{\vert x \vert^2} \dvol_{\mathbb{S}^2}.$$
This $F_A$ is obviously closed on $\mathbb{R}^3 \backslash \lbrace 0 \rbrace$ and in order to be the curvature of a connection on a complex line bundle we need $\frac{1}{2 \pi}[F_A] \in H^2(\mathbb{R}^3 \backslash \lbrace 0 \rbrace , \mathbb{R})$ to be integral, i.e. $k \in \mathbb{Z}$. If this is the case then $F_A$ is the curvature of a connection $A$ on $H^k$, the radial extension of the Hopf bundle. The monopole $(A, \Phi)$ is known as the charge $k$ and mass $m$ Dirac monopole.\\
Now, we split $\mathbb{R}^7 = \mathbb{R}^4_y \times \mathbb{R}^3_x$ and pullback $H^k$ together with $(A, \Phi)$ to $\mathbb{R}^7$. It follows from lemma \ref{lem:reduction} that $(A, \Phi)$ is a monopole on $\mathbb{R}^7$. This is singular along the coassociative submanifold $N= \mathbb{R}^4 \times \lbrace 0 \rbrace$ and $H$ is the pullback of the Hopf bundle from the spheres in the normal bundle to $N$. Moreover, notice that if $r=\vert x \vert$ denotes the distance to $N$, then
$$r\vert \Phi \vert = r^2 \vert F_A \vert = k.$$
\end{example}

\begin{example}
There are also non-Abelian examples of singular monopoles. In fact, equations $A.6$ in the Appendix to \cite{Oliveira2014}, a two parameter family of explicit irreducible monopoles with gauge group $SU(2)$ on $\mathbb{R}^3$ is given. These are spherically invariant, i.e. only depend on $r= \vert x \vert$. One can easily check that for these monopoles
$$\vert \Phi \vert= \Big\vert \frac{1}{r} - C \coth (Cr + D) \Big\vert \ , \ \vert F \vert = \Big\vert \frac{1}{r^2} + \frac{\sqrt{2}C}{\sinh(D)} \frac{1}{r} + O(1) \Big\vert$$
where $C,D$ are two real parameters such that $CD >0$. Then, from this one can check that
$$\lim_{r \rightarrow 0 } r\vert \Phi \vert = \lim_{r \rightarrow 0 } r^2 \vert F_A \vert = 1.$$
\end{example}

\section{Singular monopoles on the Bryant-Salamon $G_2$-manifolds}\label{sec:BS}

The Bryant-Salamon $G_2$ manifolds \cite{Bryant1989} having compact coassociative submanifolds are the total spaces of anti-self-dual $2$ forms $\Lambda^2_-(M)$ on a self-dual, Einstein four manifold $M$ with positive scalar curvature. These are either $\mathbb{S}^4$ or $\mathbb{CP}^2$ with $g_M$ being respectively the round and the Fubini-Study metrics. In either case, the zero section is the unique compact coassociative submanifold. Let $\pi: \Lambda^2_-(M) \rightarrow M$ denote the projection (this is the twistor projection), then the Bryant-Salamon metric can be written as
$$g = f^2(s) g_{\mathbb{R}^3} +f^{-2}(s(r)) \pi^*  g_M,$$
where $g_{\mathbb{R}^3}$ is the Euclidean metric along the fibers, $f(s)=(1+s^2)^{-1/4}$ and $s$ is the Euclidean distance along the fibers to the zero section. Then, the geodesic distance to zero section in the metric $g$ is $r(s)=\int_0^s f(t) dt$ and using it we can write the metric as
$$g = dr^2 + s^2(r) f^2(s(r)) g_{\mathbb{S}^2} +f^{-2}(s) \pi^*  g_M,$$
where $g_{\mathbb{S}^2}$ is the round metric in the normal spheres to $M$. We now define the function
\begin{equation}\label{eq:h}
h^2(t) = Vol(r^{-1}(t))=s^2(t) f^{-2}(s(t)),
\end{equation}
then there is a unique function $G$ such that
$$\frac{d G}{d t } = \frac{1}{h^2(t)} \ , \ \lim_{t \rightarrow \infty} G(t) =0.$$
This function is well defined in all of $\mathbb{R}^+$ being unbounded at the origin. Moreover Taylor expanding $G$ we can see that $G(t) = - \frac{1}{t} + O(1)$ for $t \ll 1$ and $G(t) = - \frac{c}{t^{5}} +  O(t^{-6})$ where $c >0$ for $t \gg 1$. It is an easy computation to show that $G \circ r : \Lambda^2_-(M) \backslash M \rightarrow \mathbb{R}$ satisfies $\Delta G =0$, i.e. is harmonic with respect to the Bryant-Salamon metrics.

\subsection{Dirac Monopoles, i.e. Abelian Examples}\label{sec:BSDirac}

\begin{example}
For $M = \mathbb{S}^4$, the complement of the zero section $\Lambda^2_-(\mathbb{S}^4) \backslash \mathbb{S}^4$ is topologically a cone over $\mathbb{CP}^3$ and so $H^2(\Lambda^2_-(\mathbb{S}^4) \backslash \mathbb{S}^4, \mathbb{Z}) \cong \mathbb{Z}$. Let $L \rightarrow \Lambda^2_-(\mathbb{S}^4) \backslash \mathbb{S}^4$ be the complex line bundle such that $c_1(L)$ is the generator. Then it is proven in \cite{Oliveira2014}, proposition $7$ that for all $k \in \mathbb{Z}$ and $m \in \mathbb{R}$ there is a monopole $(A_k , \Phi_{m,k})$ on $L^k$ such that $\Phi_{m,k}= m + kG$. These monopoles are singular at the zero section, which is coassociative and it is easy to check that
$$\lim_{r \rightarrow 0} r \vert \Phi_{m,k} \vert = \lim_{r \rightarrow 0} r^2 \vert F_{A_k} \vert = k.$$
\end{example}

\begin{example}
The complement of the zero section in $\Lambda^2_-(\mathbb{CP}^2)$ retracts onto $\mathbb{F}_2$, the manifold of full flags in $\mathbb{C}^3$. There is an isomorphism $H^2(\Lambda^2_-(\mathbb{CP}^2) \backslash \mathbb{CP}^2, \mathbb{Z}) \cong \mathbb{Z}^2$, under which the image of the map $\pi^* : H^2(\mathbb{CP}^2, \mathbb{Z}) \rightarrow H^2(\mathbb{F}_2, \mathbb{Z})$ is precisely the diagonal in $\mathbb{Z}^2$. Also in \cite{Oliveira2014}, before proposition $9$, it is proven that for all $(n,l) \in \mathbb{Z}^2$ and $m \in \mathbb{R}$ there is a monopole $(A_{(n,l)} , \Phi_{m,(n,l)})$ on a line bundle $L^{(n,l)}$ with $c_1(L^{(n,l)})=(n,l)$. In this case $\Phi_{m,(n,l)}= m + (l-n)G$ and we can check by Taylor expanding $G$ close to the zero section that
$$\lim_{r \rightarrow 0} r \vert \Phi_{m,(n,l)} \vert = \lim_{r \rightarrow 0} r^2 \vert F_{A_{(n,l)}} \vert = l-n.$$
In particular, when $l=n$ so that $L$ is pulled back from $\mathbb{CP}^2$ via the twistor projection $\pi$, $\Phi_{m, (n,n)}=m$ is constant and the connection $A_{(n,n)}$ is the pullback of a self-dual connection on $\mathbb{CP}^2$.
\end{example}

\begin{remark}
We remark that in fact, in the two examples above the connections $A_{k}$ and $A_{(n,l)}$ are the pullback to $\mathbb{R}^+ \times \mathbb{CP}^3 \cong \Lambda^2_-(\mathbb{S}^4) \backslash \mathbb{S}^4$, respectively $\mathbb{R}^+ \times \mathbb{F}_2 \cong \Lambda^2_-(\mathbb{CP}^2) \backslash \mathbb{CP}^2$, of pseudo-Hermitian-Yang-Mills connections for the homogeneous nearly K\"ahler structures on $\mathbb{CP}^3$ and $\mathbb{F}_2$.
\end{remark}

\subsection{Non-Abelian monopoles with Dirac type singularities}\label{sec:BSDiracType}

On the Bryant Salamon metrics on $\Lambda^2_-M$ we have already seen examples of Dirac monopoles (which recall are Abelian). It remains the question of whether non-Abelian monopoles with nontrivial "Dirac type" singularities along the zero section exist. In fact, they do exist both for $M=\mathbb{CP}^2$ and $\mathbb{S}^4$ by theorem \ref{thm:BSDyracType} in the introduction, which we shall now restate and prove.

\begin{proposition}(Theorem \ref{thm:BSDyracType})
Let $M= \mathbb{S}^4$ (resp. $\mathbb{CP}^2$), then there is an $SU(2)$(resp. $SO(3)$)-bundle $P \rightarrow \Lambda^2_-(M)$ equipped with a real two parameter family of irreducible monopoles with singularities along the zero section $M$. Moreover, the singularities are such that
\begin{equation}\label{eq:LimitBS}
\lim_{r \rightarrow 0 } r\vert \Phi \vert = \lim_{r \rightarrow 0 } r^2 \vert F_A \vert = 1.
\end{equation}
\end{proposition}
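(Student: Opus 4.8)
The plan is to exploit the large symmetry group of the Bryant--Salamon manifolds to reduce the monopole equation $\ast(F_A\wedge\psi)=d_A\Phi$ to a system of ordinary differential equations in the radial variable, and then to analyse that system. Recall that $\Lambda^2_-(M)\backslash M$ is diffeomorphic to $\mathbb{R}^+\times L$, where the link $L$ is the twistor space $\mathbb{CP}^3$ for $M=\mathbb{S}^4$ and the full flag manifold $\mathbb{F}_2$ for $M=\mathbb{CP}^2$, and where the isometry group of the Bryant--Salamon metric ($Sp(2)=Spin(5)$, respectively $SU(3)$) acts transitively on $L$, so the metric has cohomogeneity one. First I would fix a homogeneous $SU(2)$- (resp. $SO(3)$-) bundle over $L$, determined by a representation of the isotropy group, pulled back along the projection to the $\mathbb{R}^+$ factor, together with the most general invariant connection and invariant Higgs field compatible with this action. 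Substituting this ansatz into the $G_2$-monopole equation collapses the PDE to a first-order (Bogomolnyi-type) ODE system in $r$ for a small number of scalar profile functions. These are exactly the equations written down in the appendix of \cite{Oliveira2014}, so I would quote them rather than rederive them.

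The heart of the argument is the analysis of this ODE system. The zero section $r=0$ is a singular endpoint of the system, so I would study the local solutions there by an asymptotic (indicial) analysis, isolating the branch on which the Higgs profile blows up like $1/r$ while the off-diagonal connection profiles remain nonzero. Since the system is first order, a solution is pinned down by its data at an interior point, and I expect a two-real-parameter family of solutions that are smooth on $(0,\infty)$ and realise the prescribed singular behaviour as $r\to 0$; these two parameters are the geometric analogues of the constants $C,D$ (with $CD>0$) appearing in the flat $\mathbb{R}^3$ example of Section \ref{sec:Dirac}. I would then substitute the resulting profiles into the expressions for $\vert\Phi\vert$ and $\vert F_A\vert$ and Taylor-expand near $r=0$, using the expansions of $f$, $h$ and $G$ already recorded in Section \ref{sec:BS}, to verify $\lim_{r\to 0} r\vert\Phi\vert=\lim_{r\to 0} r^2\vert F_A\vert=1$, that is, charge $1$ along $M$.

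It then remains to confirm that the solutions genuinely furnish the claimed objects. Irreducibility follows by checking that the off-diagonal connection profile is not identically zero, so the structure group does not reduce to $U(1)$, and that $\Phi$ is not covariantly constant; both are read off from the explicit profiles, and I would verify they persist across the whole two-parameter family rather than only at special parameter values. One also checks that for each admissible pair of parameters the profiles stay smooth and finite on all of $(0,\infty)$, so that $(A,\Phi)$ is a bona fide monopole on $\Lambda^2_-(M)\backslash M$ with the stated singularity along the zero section.

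The main obstacle I anticipate is the global ODE analysis itself. The system is nonlinear, and I must show simultaneously that a two-parameter family of solutions exists on the entire half-line $(0,\infty)$ and that every member realises precisely the $1/r$ (charge one) singularity at $r=0$, as opposed to a subleading or exponentially divergent profile of the type appearing in Theorem \ref{thm:Ugly}. This calls for a careful asymptotic matching at the singular endpoint, pinning the leading coefficient to give charge exactly $1$, combined with a continuation argument that rules out finite-distance blow-up of the profiles and thereby guarantees that the local singular solutions extend to global monopoles.
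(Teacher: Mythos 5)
Your outline coincides with the paper's strategy (symmetry reduction to the ODE system of \cite{Oliveira2014}, then radial analysis), but it defers exactly the steps that constitute the proof, and the way you propose to handle them would not go through as stated. The paper's argument is a shooting argument from an \emph{interior} point with a carefully chosen invariant region: one takes $\phi(t_0)=0$, $b(t_0)=b_0\in(0,1)$ (these are the two parameters, $t_0$ and $b_0$), and proves a priori that $b(t)$ remains in $(0,1)$ for all $t$. This uses two concrete mechanisms you do not supply: (i) $b$ cannot vanish anywhere, because real-analyticity of solutions together with repeated differentiation of $\dot b=2\phi b$ forces $b^{(k)}(t_1)=0$ for all $k$ and hence $b\equiv 0$ (the reducible case); (ii) $b$ cannot cross $1$, because the second-order equation $b\ddot b=\dot b^2+\tfrac{b^2}{h^2}(b^2-1)$ shows every critical point with $b<1$ is a maximum and every one with $b>1$ is a minimum. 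Boundedness of $b$ is what rules out finite-distance blow-up of $\phi$ (since $\dot\phi=(b^2-1)/2h^2$ can then only be singular where $h$ vanishes, i.e.\ at $t=0$), so the "continuation argument" you flag as an obstacle is not a separate step: it is a corollary of the invariant region.

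Your plan for the asymptotics is also structurally off. You propose an indicial analysis at $r=0$ and an "asymptotic matching" that "pins the leading coefficient to give charge exactly $1$", as if the charge were a parameter to be tuned. In the paper the $1/r$ behaviour with coefficient exactly $\tfrac12$ for $\phi$ (hence charge $1$ after normalization) is \emph{forced} for every member of the family: once $b\in(0,1)$, the bounds $-c_1^{-1}/t^2\ge\dot\phi\ge -c_1/t^2$ (using $h^2(t)=t^2+O(t^3)$) integrate to $\phi\asymp 1/t$, which fed back into $\dot b=2\phi b$ gives $b(t)\le c\,t^{\mu}$ with $\mu>0$, so $b\to0$; a second pass through $\dot\phi=(b^2-1)/2h^2$ then yields $\tfrac{1}{2t}+c'\ge\phi(t)\ge\tfrac{1}{2t}+c''$, from which equation \ref{eq:LimitBS} is immediate. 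Finally, you anticipate having to exclude the exponential divergence of Theorem \ref{prop:Ugly} but offer no mechanism for doing so; the mechanism is the sign of the nonlinearity, $\dot\phi=(b^2-1)/2h^2$ with $b^2-1<0$ here versus $\dot\phi=(1+b^2)/2h^2$ on the bundles $P_{\alpha_1}$, $P_{\alpha_3}$, which is precisely what the invariant region $b\in(0,1)$ exploits. Without these three ingredients --- the choice of initial data, the invariant region, and the bootstrap of integral estimates --- the proposal is a statement of intent rather than a proof.
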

\begin{proof}
In \cite{Oliveira2014} the monopole equations are reduced to ODE's under a symmetry assumption. We recall here the ODE's from propositions $6$ and $10$ in \cite{Oliveira2014}
\begin{eqnarray}\label{eq:sODE1}
\dot{\phi} & = & \frac{b^2-1}{2h^2}\\ \label{eq:sODE2}
\dot{b} & = & 2b\phi,
\end{eqnarray}
where $h$ is as in equation \ref{eq:h}. Therefore, to produce irreducible monopoles with singularities it is enough to produce solutions to those ODE's satisfying the required properties. In particular, for the connection to be irreducible it is enough for $b$ to be nonzero, which will be the case for the solutions we construct below. Let $t_0 \in \mathbb{R}^+$. The standard theorem for existence and uniqueness of solutions guarantee that for any initial condition at $t_0$ there is a unique (real analytic) solution to \ref{eq:sODE1} and \ref{eq:sODE2}. Hence, we construct the $2$-parameter family of solutions parametrized by $t_0 \in \mathbb{R}^+$ and $b_0 \in (0,1)$, given by the initial conditions
$$\phi(t_0)=0, \ \ b(t_0)=b_0 \in (0,1).$$
We will now prove that for any of these solutions $b(t) \in (0,1)$, for all $t \in \mathbb{R}^+$. Suppose not, then there is $t_1 \in \mathbb{R}^+$ such that either $b(t_1)=0$ or $b(t_1)=1$.
\begin{itemize}
\item In the first case, i.e. if $b(t_1)=0$, then $\dot{b}(t_1)=0$ and
$$\dot{\phi}(t_1) = -\frac{1}{2h^2(t_1)} < \infty.$$
By continuing to differentiate the equation $\dot{b}=2 \phi b$ we can prove that $b^{(k)}(t_1)=0$ for all $k \in \mathbb{N}$. Hence, as $b$ is real analytic it must vanish identically on all $\mathbb{R}^+_0$.

\item Notice that as $\phi(t_0)=0$, $\dot{b}(t_0)=0$ and so $b$ has a critical point at $t_0$. By differentiating the ODE's we are left with a single second order ODE for either $\phi$ or $b$, the former of which is
$$b \ddot{b} = \dot{b}^2 + \frac{b^2}{h^2}(b^2 -1) .$$
%$$\ddot{\phi} = -\frac{d}{dt}(\log(h^2))\dot{\phi}+ 4 \phi \dot{\phi} + \frac{2}{h^2} \phi$$
We already know that $b$ remains positive, and we can use this equation to infer the possible values of $b$ at its critical points. Namely, if $b$ has a critical point with $b < 1$, then that point must be a maximum, while if $b > 1$ it must be a minimum. Hence, the critical point of $b$ at $t_0$ is a maximum as $b(t_0) \in (0,1)$. Now suppose that $b$ crosses $1$, then by continuity there must be $t_2 \in \mathbb{R}^+$ such that $b(t_2) \in (0,1)$ and $b$ has a minimum at $t_2$, which is a contradiction.
\end{itemize}
We conclude that $b$ remains bounded for all time $t \in \mathbb{R}^+$ and so the only way $\phi$ can blow up is at the singularities of $\frac{1}{2h^2(t)}$, the only of which is at $t=0$.\\
To prove the behavior claimed in equation \ref{eq:LimitBS}, notice that since $b(t) \in (0,1)$ for $t \in \mathbb{R}^+$, $\dot{\phi}= \frac{1}{2 h^2}(b^2-1)<0$. Moreover, as $\phi(t_0)>0$ we have $\phi(t)>0$, for all $t \in (0, t_0]$. Then, since $\dot{b}=2\phi b$, $\dot{b}(t)>0$ for $t \in (0, t_0]$ and so $b(t)<b(t_0)<1$ for $t \in (0,t_0]$. This, together with the fact that $b(t)>0$ for such $t$'s, proves that there is a constant $c_1>1$, such that
\begin{equation}\label{eq:phiFirstBound}
- \frac{c_1^{-1}}{t^2} \geq \dot{\phi}(t) \geq -\frac{c_1}{t^2},
\end{equation}
where we used the fact that $h^2(t)=t^2 +O(t^3)$, for small $t$. Integrating this for $t \in (0,t_0]$
\begin{eqnarray}\nonumber
\phi(t)-\phi(t_0) & \geq &  -\int_{t_0}^t \frac{c_1}{s^2} ds = \frac{c_1}{t} - \frac{c}{t_0},
\end{eqnarray}
and similarly for an upper bound. We conclude that so $\frac{c_1^{-1}}{t} + c_3 > \phi(t) > \frac{c_1}{t} + c_2$ for small $t$ and some $c_2,c_3 \in \mathbb{R}$.\\
Inserting this back into $\dot{b}=2\phi b$ we conclude that $ c_4 t^{2c_1^{-1}} \geq b(t) \geq c_4^{-1} t^{2c_1}$ for some $c_4 >0$ and all $t \in (0, t_0]$. This shows that $b$ continuously extends to the origin with $b(0)=0$.\\
We can now go back to the ODE for $\phi$ and improve our previous bounds to $-\frac{1}{2t^2} + c_5 t^{4c_1^{-1}-2} \geq \dot{\phi} \geq - \frac{1}{2t^2} + c_6 t^{4c_1-2}$, for small $t$. In fact from these we can actually infer that the constant $c_1$ in the bound \ref{eq:phiFirstBound} could have been taken to take values in $(1,4)$. Hence, once integrated, the previous bounds yield
$$\frac{1}{2t} + c_5' \geq \phi(t) \geq \frac{1}{2t} + c_6' ,$$
for some $c_5', c_6' \in \mathbb{R}$ and all $t$ sufficiently small. It is now immediate to conclude that the limiting behavior in equation \ref{eq:LimitBS} holds.

%Inserting this back into $\dot{b}=2\phi b$ we conclude that $ c_4 t \geq b(t) \geq c_4^{-1} t$ for some $c_4 >0$ and all $t \in (0, t_0]$. This shows that $b$ continuously extends to the origin as $b(0)=0$.\\
%We can now go back to the ODE for $\phi$ and improve our previous bounds to $c' - \frac{1}{2t^2} \geq \dot{\phi}(t) \geq (c')^{-1} - \frac{1}{2t^2}$, for small $t$. Once integrated, these bounds yield
%$$\frac{1}{2t} + c'' \geq \phi(t) \geq \frac{1}{2t} + (c'')^{-1},$$
%for some $c'' \in \mathbb{R}$ and all $t$ sufficiently small. It is now immediate to conclude that the limiting behavior in equation \ref{eq:LimitBS} holds.
\end{proof}

\subsection{An example of worse than Dirac singularities}\label{sec:Ugly}

We now focus on the Bryant-Salamon metric on $\Lambda^2_-( \mathbb{CP}^2)$ on which we have proved monopoles with nontrivial Dirac type singularities exist. It remains the question of whether there are monopoles with singularities which are not of this type. In this section we show these indeed exist, and give an example of a singular monopole on $\Lambda^2_-( \mathbb{CP}^2)$, whose singularities are worse than the ones we have seen so far.\\
As already remarked above, the sphere bundle in $\Lambda^2_-( \mathbb{CP}^2)$, i.e. the twistor space of $\mathbb{CP}^2$, is the flag manifold $\mathbb{F}_2$. This is homogeneous and $SU(3)$ acts transitively with isotropy the maximal torus $T^2$. The Serre spectral sequence for the fibration $SU(3) \rightarrow \mathbb{F}_2$ gives $H^2( \mathbb{F}_2 , \mathbb{Z}) \cong H^1(T^2, \mathbb{Z})$, which we can further identify with the integral weight lattice in $(\mathfrak{t}^2)^*$. An explicit way to unravel through this identification using Chern classes to make the identification is as follows. Given an integral weight $\alpha \in (\mathfrak{t}^2)^*$ we construct the line bundle on $\mathbb{F}_2$
$$L_{\alpha} = SU(3) \times_{e^{\alpha}, T^2} \mathbb{C}.$$
Now let $1 \in SU(3)$ be the identity and $\mathfrak{m}\subset \mathfrak{su}(3)$ be a reductive complement to the Cartan subalgebra generated by the isotropy, i.e. $\mathfrak{su}(3) = \mathfrak{t}^2 \oplus \mathfrak{m}$ with $[\mathfrak{t}^2 , \mathfrak{m}] \subset \mathfrak{m}$ (for example, we can let $\mathfrak{m}$ be the real part of the root spaces). Then, we extend $\alpha$, first to $\mathfrak{su}(3)^*$ by letting it vanish on $\mathfrak{m}$, and secondly to $\Omega^1(SU(3), i\mathbb{R})$ by left translations. It is now easy to see that $\alpha$ equips $L_{\alpha}$ with a connection and so its first Chern class $ \frac{i}{2 \pi}[d \alpha] \in H^2(\mathbb{F}_2 , \mathbb{Z})$ gives the corresponding element in the second cohomology induced by $\alpha$. Back to the connection $\alpha$, it is usually called the canonical invariant connection on $L_{\alpha}$ and is uniquely determined by $\mathfrak{m}$.\\
We shall now turn to the construction of $SO(3)$-bundles over $\mathbb{F}_2$, carrying interesting invariant connections. These are constructed by composing the homomorphism $e^{ \alpha }: T^2 \rightarrow \mathbb{S}^1$ with the embedding of $ \mathbb{S}^1 \hookrightarrow SO(3)$ as the maximal torus, then setting
$$P_{\alpha} = SU(3) \times_{(e^{ \alpha}, T^2)} SO(3).$$
These $SO(3)$-bundles are in fact reducible to the circle bundles inducing $L_{\alpha}$ and can be equipped with the induced connections $ \alpha \in  \Omega^1(SU(3), \mathfrak{so}(3))$ viewed as left invariant $1$-forms in $SU(3)$ with values in $\mathfrak{so}(3)$ by embedding $i \mathbb{R} \hookrightarrow \mathfrak{so}(3)$. These induced connections are also $SU(3)$-invariant and it follows from Wang's theorem, \cite{Wang1958}, that other invariant connections are in $1$ to $1$ correspondence with morphisms of $T^2$-representations
$$\Lambda : ( \mathfrak{m}, \Ad ) \rightarrow (\mathfrak{so}(3), \Ad \circ e^ {\alpha} ).$$
Decomposing these into irreducible components $ \mathfrak{m}  \cong \mathbb{C}_{\alpha_1} \oplus \mathbb{C}_{\alpha_2} \oplus \mathbb{C}_{\alpha_3}$, where $\alpha_1, \alpha_2, \alpha_3$ are the positive roots of $SU(3)$, while $\mathfrak{so}(3) \cong \mathbb{R}_0 \oplus \mathbb{C}_{\alpha}$. Hence it follows from Schur's lemma that such morphisms of representations exist if and only if $\alpha$ is one of the roots, in which case $\Lambda$ restricts to the corresponding root space as an isomorphism onto $\mathbb{C}_{\alpha} \subset \mathfrak{so}(3)$ and vanishes in all other components. If $\alpha = \alpha_i$ we shall denote these by $\Lambda_i$. Then, notice that fixing a basis of $\mathfrak{m}$ and a basis of $\mathfrak{so}(3)$ (i.e. a gauge) each $\Lambda_i$ is determined up to a constant.\\
We turn now to the problem of constructing monopoles on the bundles $P_{\alpha}$. In \cite{Oliveira2014} the monopole equation in each of these cases is analyzed and it is shown that smooth solutions exist only for $\alpha = \alpha_2$, where $\mathbb{C}_{\alpha_2}$ is the image of $(\pi_2)^*: H^2(\mathbb{CP}^2, \mathbb{Z}^2) \rightarrow H^2(\mathbb{F}_2 , \mathbb{Z}^2)$, where $\pi_2 : \mathbb{F}_2 \rightarrow \mathbb{CP}^2$ is the twistor projection. In that case these monopoles can be completely classified as in theorem $6$. In the other cases, the underlying $SO(3)$-bundles do not extend over the zero section (lemma $5$) and it is shown in proposition $11$ that no smooth solutions exist. To understand the result and the non-smooth solutions arising from solving the $SU(3)$-invariant monopole equations it is convenient to proceed as follows. Take $\alpha= \alpha_3$ (the case $\alpha = \alpha_1$ is similar), and extend the bundle and the connection to the complement of the zero section, i.e. to $\mathbb{R}^+_r \times \mathbb{F}_2$. Now the connection $\alpha + \Lambda_3 (r)$ can be seen as an element of $\Omega^1(\mathbb{R}^+ \times SU(3), \mathfrak{so}(3))$. Invariant Higgs fields are in correspondence with $SU(3)$-invariant maps $\mathbb{R}^+  \times SU(3) \rightarrow  \mathfrak{so}(3)$, which are also $T^2$-equivariant, with $T^2$ acting by right translations on $SU(3)$ and by $\Ad \circ e^{ \alpha_3}$ on $\mathfrak{so}(3)$. These two conditions force such Higgs fields to be in correspondence with functions $i \phi : \mathbb{R}^+ \rightarrow i \mathbb{R}$ composed with the map $i \mathbb{R} \rightarrow \mathfrak{so}(3)$ induced by the maximal torus embedding. Then, in \cite{Oliveira2014} the invariant monopole equations are computed. They reduce to ODE's and in terms of $\phi$ and $b^2=2 s^2( r ) f^{-2}(r) \vert \Lambda_{3} \vert^2$ these are
\begin{eqnarray}\label{eq:ODE1}
\dot{\phi} & = & \frac{1 + b^2}{2h^2}\\ \label{eq:ODE2}
\dot{b} & = & 2b\phi,
\end{eqnarray}
where the dot denotes differentiation with respect to $r$ and $h,f,s$ are defined in the previous section, when the Bryant-Salamon metrics were introduced. For all solutions to the ODE's above, $\phi$ is unbounded as $1+b^2 \geq 1$ and $h(0)=0$. We shall now show that this singularity is worse than the singularities we have seen so far. Namely we shall prove theorem \ref{thm:Ugly} in the introduction

\begin{theorem}\label{prop:Ugly}(Theorem \ref{thm:Ugly})
There is a real $2$-parameter family of $SU(3)$-invariant, irreducible monopoles on the bundles $P_{\alpha_1}$ and $P_{\alpha_3}$ over $\Lambda^2_-(\mathbb{CP}^2) \backslash \mathbb{CP}^2$ (possibly not defined on the whole  $\Lambda^2_-(\mathbb{CP}^2) \backslash \mathbb{CP}^2$). Moreover, these monopoles have the following properties
\begin{itemize}
\item For any $\epsilon>0$, there is a monopole in this family which is defined in a neighborhood of $r^{-1}(\epsilon)$.

\item If there is a monopole $(A, \Phi)$ which is defined in a neighborhood of the zero section, with the zero section removed, then there is $\delta>0$ such that
\begin{equation}\label{eq:BadSing2}
\vert e^{-\frac{\delta}{r^2}} \Phi(r) \vert 
\end{equation}
is unbounded in that neighborhood of the zero section.
\end{itemize}
\end{theorem}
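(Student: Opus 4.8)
The plan is to argue entirely at the level of the reduced system \eqref{eq:ODE1}--\eqref{eq:ODE2}, since by the discussion preceding the theorem the $SU(3)$-invariant connections and Higgs fields on $P_{\alpha_1}$ and $P_{\alpha_3}$ correspond bijectively to its solutions, with $\vert\Phi\vert$ a fixed positive multiple of $\vert\phi\vert$ and irreducibility equivalent to $b\not\equiv 0$. First I would exhibit the family: for any $t_0\in\R^+$ and any data $(\phi(t_0),b(t_0))=(\phi_0,b_0)$ with $b_0\neq 0$, the standard existence and uniqueness theorem gives a unique real-analytic solution on a maximal open subinterval of $\R^+$ containing $t_0$, and letting $(\phi_0,b_0)$ vary produces the asserted real two-parameter family. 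Since $\tfrac{d}{dr}\ln\vert b\vert=2\phi$ stays finite wherever $\phi$ does, $b$ cannot vanish on the interval of existence, so each solution is irreducible. The first bullet is then immediate: given $\epsilon>0$ one takes $t_0=\epsilon$, and the maximal interval is a neighbourhood of $\epsilon$. That these solutions need not extend to all of $\R^+$ (the parenthetical ``possibly not defined on the whole'') reflects exactly the finite-$r$ blow-up that the second bullet makes quantitative.

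For the second bullet I would assume a solution is defined on a punctured neighbourhood $(0,\epsilon)$ of the zero section and study $r\to 0$. Two preliminary limits seed everything. By \eqref{eq:ODE1}, $\dot\phi=\tfrac{1+b^2}{2h^2}\geq\tfrac{1}{2h^2}$, and since $h^2(r)=r^2+O(r^3)$ the integral $\int_r^{\epsilon}\tfrac{ds}{2h^2(s)}$ diverges as $r\to0$; integrating $\dot\phi$ forces $\phi(r)\to-\infty$, so $\phi<0$ near the origin. Feeding this into $\tfrac{d}{dr}\ln b=2\phi$ and integrating then forces $b(r)\to+\infty$. Passing to $t=1/r\to\infty$ and $p=-\phi>0$, and using $h^2t^2=1+O(1/t)$, the system becomes
\[
\frac{dp}{dt}=\frac{1+b^2}{2}\bigl(1+O(1/t)\bigr)\geq\frac{b^2}{2},\qquad \frac{d}{dt}\ln b=\frac{2p}{t^2},
\]
with $p,b\to+\infty$ as $t\to\infty$.

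The heart of the matter is a bootstrap that exploits the positive feedback between these two relations. Integrating $\tfrac{dp}{dt}\geq\tfrac12$ gives the base bound $p\gtrsim t$. I would then alternate a ``$b$-move'' (integrate $\tfrac{d}{dt}\ln b=\tfrac{2p}{t^2}$) with a ``$p$-move'' (integrate $\tfrac{dp}{dt}\geq\tfrac{b^2}{2}$). From $p\gtrsim t$ the $b$-move yields $b\gtrsim t^{c_0}$ with $c_0\in(\tfrac12,1)$ (the coefficient sharpens towards $\tfrac12$ on taking the leading part of $\tfrac1{2h^2}$); the $p$-move then yields $p\gtrsim t^{2c_0+1}$, an exponent exceeding $2$; and the next $b$-move yields $\ln b\gtrsim t^{2c_0}$ with $2c_0>1$, hence $b\geq e^{t}$ for all large $t$. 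Once $b\geq e^{t}$ the loop becomes explosive: a $p$-move gives $p\gtrsim e^{2t}$, a $b$-move gives $\ln b\gtrsim e^{2t}/t^2$, and a final $p$-move gives $\ln p\gtrsim e^{2t}/t^2$. As $e^{2t}/t^2\gg t^2$, this means $\ln(-\phi)\gg t^2=1/r^2$, i.e.\ for every $\delta>0$ one has $-\phi(r)\geq e^{\delta/r^2}$ for all small $r$; since $\vert\Phi\vert$ is a fixed multiple of $\vert\phi\vert$, the quantity in \eqref{eq:BadSing2} is unbounded, as claimed.

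The step I expect to be the main obstacle is making this escalating bootstrap rigorous. I must carry genuine lower bounds (not merely asymptotics) through integrals whose integrands grow like iterated exponentials, and I must track the constants precisely enough to certify that the exponents really do cross the thresholds ($>2$ for $p$, $>1$ for $\ln b$) that ignite the explosive phase, while absorbing the $O(1/t)$ errors coming from $h^2\neq r^2$. A secondary but necessary point is to establish cleanly the two seeding limits $\phi\to-\infty$ and $b\to+\infty$, together with the elementary fact that $b\neq0$ persists throughout the interval of existence, which is what makes the monopoles irreducible.
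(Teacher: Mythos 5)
Your proposal is correct and takes essentially the same approach as the paper: the two-parameter family comes from ODE existence/uniqueness for \eqref{eq:ODE1}--\eqref{eq:ODE2} with data $(\phi_0,b_0)$, $b_0\neq 0$, prescribed at an interior radius (which also gives the first bullet), and the second bullet is proved by the same alternating bootstrap between the two equations, which the paper runs directly in the variable $r$, obtaining successively $-\phi\gtrsim r^{-1}$, then $b\gtrsim r^{-1}$, then $-\phi\gtrsim r^{-3}$, then $\ln b\gtrsim c_5 r^{-2}$, and finally unboundedness of $e^{-\delta/r^2}\vert\Phi\vert$ for any $\delta<c_5$. Your additional rounds of iteration (yielding doubly exponential growth, hence unboundedness for \emph{every} $\delta>0$) are a valid refinement within the same method, and the constant-tracking you flag as the main obstacle is routine: since $h^2(r)=r^2\bigl(1+O(r^2)\bigr)$ the error is integrable and the sharp constant in $-\phi\geq \tfrac{1}{2r}-C$ survives, so the first $b$-move already gives $b\gtrsim r^{-1}$ (exponent $1$, not merely $c_0>\tfrac12$) and all thresholds are crossed with room to spare, exactly as in the paper's own estimates.
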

\begin{proof}
The monopole resulting from evolving the ODE's is reducible if and only if $b=0$ identically and $\phi$ solves $\dot{\phi}=\frac{1}{2h^2}$, hence we shall exclude this case from the analysis. Let $R>0$ be a fixed positive number, and denote by $(\phi(r),b(r))$ the solutions to the ODE system \ref{eq:ODE1}-\ref{eq:ODE2} for $r\leq R$, which at $r=R$ are valued $(\phi(R), b(R))$ for some $b(R) \neq 0$ (in order to exclude the reducible one). These parametrize the $2$-parameter family alluded in the statement. Then, either
\begin{itemize}
\item $(\phi(r), b(r))$ explodes before $r=0$, in which case the monopole is only defined away from the zero section. Notice that from the ODE's \ref{eq:ODE1}-\ref{eq:ODE2}, if either the fields $b$ or $\phi$ explodes at some $r \in (0,R]$, the other one also explodes at that same $r$. Further notice that, given $\epsilon>0$, one can make $R=\epsilon$ and so the first item in the statement holds.
\item $(\phi(r), b(r))$ exists on the whole interval $(0,R]$, in which case one must prove the second item in the statement.
\end{itemize}
We are then reduced to consider the second case above. In the remainder of this proof we shall use $c_i$'s to denote positive constants, which can be chosen so that the bounds claimed are true. From the first ODE above, \ref{eq:ODE1} it follows that for $R>r$
$$\phi(R) - \phi(r) = \int_r^R \frac{1 + b^2(t)}{2h^2(t)} dt \geq \int_r^R \frac{1}{2h^2(t)} dt.$$
Then, we Taylor expand $h(t)= t + O(t^3)$ close to the origin and we conclude that there is $c_1>0$ such that $\phi(R) - \phi(r) \geq -c_1 + \frac{1}{2r}$, which we can rearrange to $\phi(r) \leq \phi(R)+c_1-\frac{1}{2r}$. We plug this into the second ODE, i.e. equation \ref{eq:ODE2} which then gives $\frac{d}{dr}(\log (b^2(r))) \leq 4( \phi(R)+c_1-\frac{1}{2r})$. Then, we integrate this to
$$\log \left( \frac{b^2(R)}{b^2(r)} \right) \leq 4(\phi(R)+c_1)(R-r) -2 \log \left( \frac{R}{r} \right),$$
which we can rearrange to $b^2(r) \geq b^2(R) \frac{R^{2}}{r^{2}} e^{-4(\phi(R)+c_1)(R-r)}$. Putting this back into equation \ref{eq:ODE1} and integrating we obtain now
\begin{eqnarray}\nonumber
\phi(R) - \phi(r) & = & \int_r^R \frac{1 + b^2(t)}{2h^2(t)} dt \\ \nonumber
& \geq & -c_1 + \frac{1}{2r} + b^2(R) \int_r^R \frac{1}{2h^2(t)} \frac{R^{2}}{t^{2} } e^{-4(\phi(R)+c_1)(R-t)}  dt \\ \nonumber
& \geq & -c_1 + \frac{1}{2r} + c_2 \int_r^R \frac{1}{t^{4}} dt \\ \label{eq:ineqIntermediate}
& \geq & -c_3 + \frac{1}{2r} + \frac{c_4}{r^{3}},
\end{eqnarray}
where $c_2,c_3,c_4>0$ are constants. We can now insert this into the ODE \ref{eq:ODE1} ands improve the bound on $b(r)$ to again improve the bound on $\phi(r)$. In fact, to prove our claim, it is enough to iterate this only once more. From inserting inequality \ref{eq:ineqIntermediate} into the ODE \ref{eq:ODE2} once again, we obtain $\frac{d}{dr}(\log (b^2(r))) \leq 4( \phi(R)+c_3-\frac{c_5}{r^{3}})$ and integrating
\begin{eqnarray}
\log \left( \frac{b^2(R)}{b^2(r)} \right) & \leq &  c_6 -\frac{c_5}{r^{2}},
\end{eqnarray}
by possibly redefining the constant $c_5$. This shows that $b^2(r) \geq b^2(R) e^{-c_6 +\frac{c_5}{r^{2}}}$, which when inserted into equation \ref{eq:ODE1} gives
\begin{eqnarray}\nonumber
\phi(R) - \phi(r) & = & \int_r^R \frac{1 + b^2(t)}{2h^2(t)} dt \\ \nonumber
& \geq & -c_1 + \frac{1}{2r} + c_7 \int_r^R \frac{e^{\frac{c_5}{t^{2}}}}{t^2} \\ \nonumber
& \geq & -c_8 + \frac{1}{2r} + \frac{c_7}{c_5} r e^{\frac{c_5}{r^2}}+ \frac{c_7}{c_5} \int_r^{R} e^{c_5/t^2} dt,
\end{eqnarray}
which diverges exponentially as $r \rightarrow 0$ from the right. More precisely, in order to prove the claim in the statement we check that
\begin{eqnarray}\nonumber
\lim_{r\rightarrow 0} \vert \Phi(r) e^{-\frac{\delta}{r^2}} \vert \geq c_8 \lim_{r \rightarrow 0} \frac{\int_{r}^{R} e^{c_5/t^2} dt}{e^{\delta/r^2}} = \frac{c_8}{2 \delta} \lim_{r \rightarrow 0} r^3 e^{\frac{c_5-\delta}{r^2}} = \infty,
\end{eqnarray}
if $\delta < c_5$.

\end{proof}

\section{Singular monopoles on compact $G_2$ manifolds}\label{sec:General}

In this section we prove theorem \ref{th:Dirac}, which gives sufficient conditions for the existence of Dirac monopoles, i.e. Abelian monopoles with Dirac type singularities, on a compact $G_2$ manifold. Then, we give a toy non-Abelian example with Dirac type singularities, where the underlying $G_2$-structure is not torsion free. We finish by setting the problem of constructing non-Abelian monopoles with Dirac type singularities, which we hope to address in the future.

\subsection*{Proof of theorem \ref{th:Dirac}}

We recall here the statement of theorem

\begin{theorem}\label{th:Dirac2}(Theorem \ref{th:Dirac})
Suppose $(X, \varphi)$ is a compact, irreducible $G_2$ manifold, i.e. it has full holonomy $G_2$. Let $N = N_1 \cup \ldots \cup N_k$ be a disjoint union of compact, connected and embedded coassociative submanifolds of $(X, \varphi)$. Then, for all $\alpha \in H^2(M , \mathbb{Z})$, there is an Abelian monopole $(A, \Phi)$ on $X$ with Dirac type singularities along $N$, defined on a line bundle $L \rightarrow M$ with $c_1(L)=\alpha$. Moreover, such monopole has charge $\ev_i(\alpha)$ along each $N_i$.
\end{theorem}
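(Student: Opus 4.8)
The plan is to treat the problem as a \emph{linear} one. For $G=S^1$ the bundle $\mathfrak g_P$ is trivial, so $\Phi$ is an honest function and $F_A$ a closed $2$-form with $\tfrac1{2\pi}[F_A]=\alpha$, and the monopole equation reads $\ast(F_A\wedge\psi)=d\Phi$, equivalently $F_A\wedge\psi=\ast d\Phi$. As in the introduction, the Bianchi identity together with $d\psi=0$ forces $\Delta\Phi=0$ on $M$; thus the Higgs field must be a harmonic function on $M=X\setminus N$ with a prescribed pole along $N$, and producing such a $\Phi$ is the first half of the construction.

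First I would build $\Phi$. Since each $N_i$ has codimension $3$, the fundamental solution of $\Delta$ transverse to $N_i$ decays like $r_i^{-1}$, so I look for $\Phi$ solving, in the distributional sense on the \emph{compact} manifold $X$, the equation $\Delta\Phi=\mu_\alpha$, where $\mu_\alpha$ is the measure supported on $N$ with $\mu_\alpha|_{N_i}=c\,k_i\,\dvol_{N_i}$, $c$ the universal transverse normalization and $k_i=\ev_i(\alpha)$. The Fredholm alternative for $\Delta$ on the closed manifold $X$ makes this solvable precisely when $\int_X\mu_\alpha=0$, i.e.\ when $\sum_i k_i\vol(N_i)=0$. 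This is exactly where the topology enters: because $N_i$ is coassociative, $\vol(N_i)=\langle[\psi],[N_i]\rangle$, so $\sum_i k_i\vol(N_i)=\langle[\psi],\sum_i k_i[N_i]\rangle$; and the existence of $\alpha\in H^2(M,\mathbb Z)$ with $\ev_i(\alpha)=k_i$ forces $\sum_i k_i[N_i]=0\in H_4(X,\mathbb Z)$ (the Remark after Theorem~\ref{th:Dirac}, via the long exact sequence of the pair), whence the flux balances and $\Phi$ exists, unique up to the additive constant $m$ (the mass). Elliptic regularity away from $N$ and an analysis of the transverse model then give $\Phi=k_i\,r_i^{-1}+O(1)$, so $\lim_{r_i\to0}r_i|\Phi|=k_i$.

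Next I would recover $F_A$ from $\Phi$. Under the splitting $\Omega^2=\Omega^2_7\oplus\Omega^2_{14}$ one has $\Omega^2_{14}=\ker(\,\cdot\wedge\psi\,)$ while $\cdot\wedge\psi\colon\Omega^2_7\xrightarrow{\sim}\Omega^6$ is a pointwise isomorphism; hence $F_A\wedge\psi=\ast d\Phi$ determines $\pi_7F_A=:F_7$ uniquely (explicitly $F_7\propto\ast(d\Phi\wedge\psi)$, which one checks is coclosed), while $\pi_{14}F_A$ is free. The remaining task is to complete $F_7$ to a \emph{closed} $2$-form in the class $2\pi\alpha$, i.e.\ to find $\beta\in\Omega^2_{14}$ with $d\beta=-dF_7$ and $[F_7+\beta]=2\pi\alpha$. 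Here the full-holonomy hypothesis is used: $H^2_7(X)=0$, so every degree-two class on $X$ admits an $\Omega^2_{14}$ (instanton-type) harmonic representative, supplying the freedom needed to absorb $-dF_7$ and fix the cohomology class. The leading behaviour of $F_A$ near $N_i$ is the charge-$k_i$ Dirac curvature on the normal $\mathbb S^2_i$, giving $\lim_{r_i\to0}r_i^2|F_A|=k_i$ and $\ev_i(c_1(L))=k_i$ as in Remark~\ref{rem:ev}.

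I expect the main obstacle to be the elliptic analysis near the singular set $N$: both solvability steps must be carried out in function spaces adapted to the conic rates $r_i^{-1}$ (for $\Phi$) and $r_i^{-2}$ (for $F_A$), and one must verify that the solutions carry \emph{exactly} Dirac-type singularities, with charges exactly $\ev_i(\alpha)$ and no worse-than-Dirac tail. The completability of $F_7$ reduces to the identity $[\ast d\Phi]=2\pi\,\alpha\smile[\psi]$ in $H^6(M)$; reassuringly this is not a new constraint but again the flux matching, since both sides are detected on the tube boundaries $\partial(\text{tube})_i\simeq\mathbb S^2_i\times N_i$ by $k_i\vol(N_i)$. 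Reconciling this analytic flux balance with the topological constraint $\sum_i k_i[N_i]=0$, while controlling signs and normalizations throughout, is the delicate heart of the argument.
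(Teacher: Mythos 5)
Your first half is sound, and is in fact a nice alternative to the paper's route to the Higgs field: you solve the scalar equation $\Delta\Phi=\mu_\alpha$ directly on $X$ by the Fredholm alternative, verifying the compatibility condition $\sum_i k_i\vol(N_i)=0$ via the calibration identity $\vol(N_i)=\int_{N_i}\psi=\langle[\psi],[N_i]\rangle$ together with the homological vanishing $\sum_i k_i[N_i]=0$ (the paper's Step 1). The paper instead solves $\Delta H=\sum_i\ev_i(\alpha)\,\delta_{N_i}$ for a $3$-form current $H$ and extracts the Higgs field from its $\Lambda^3_1$-component (writing $\pi_1(H)=-a\varphi$ and $\phi=7a$); your $\Phi$ agrees with that $\phi$ up to normalization, since the paper's $\phi$ satisfies $\Delta\phi=\sum_i\ev_i(\alpha)\,\delta_{N_i}\wedge\psi$ and $\delta_{N_i}\wedge\psi$ is exactly the volume measure of the coassociative $N_i$.

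The genuine gap is in your curvature step. Having fixed $\Phi$, you must produce $\beta\in\Omega^2_{14}(M)$ with $d\beta=-dF_7$, where $F_7$ is the pointwise type-$7$ form determined by $d\Phi$. Note that $dF_7\neq 0$ in general: $F_7$ is proportional to $i_{\nabla\Phi}\varphi$, and $d\,i_{\nabla\Phi}\varphi=\mathcal{L}_{\nabla\Phi}\varphi$ has no reason to vanish even for harmonic $\Phi$, so this is an honestly overdetermined PDE ($\beta$ has $14$ pointwise components while $d\beta$ must equal a prescribed $3$-form). Your proposed mechanism---$H^2_7(X)=0$, hence harmonic representatives of classes on $X$ are of type $14$---cannot close this gap: harmonic forms are \emph{closed}, so adding them changes nothing in $d\beta$; they can only adjust the final cohomology class, which is exactly how the paper uses them (Steps 3--4: the harmonic form $b$ with $b\wedge\psi=0$ is added at the very end to arrange $[F]=\alpha$, after a closed $F'$ solving the equation already exists). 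Likewise, the identity $[\ast d\Phi]=2\pi\,\alpha\smile[\psi]$ in $H^6(M)$ is a necessary consequence, not a sufficient criterion, because $F\wedge\psi=\ast d\Phi$ prescribes $\pi_7F$ \emph{pointwise}, not cohomologically. The paper's actual mechanism for producing the compensating type-$14$ piece is the Hodge-theoretic construction you bypassed: setting $F'=d^*H$ makes $F'$ closed on $M$ for free (since $dH=0$ globally, being harmonic and exact), and the monopole identity $\ast(F'\wedge\psi)=d\phi$---whose type-$14$ part is $\pi_{14}d^*\pi_{27}H$, precisely the $\beta$ you need---follows from the $G_2$ algebra once one proves $\pi_7(H)=0$, which uses both coassociativity of $N$ (giving $\pi_7(\delta_{N_i})=0$) and irreducibility of $X$ (no harmonic $1$-forms). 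Without an argument of this kind, decoupling the system into ``first $\Phi$, then $F$'' leaves the existence of $F$ unproved.
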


We divide the proof into $4$ steps:\\

\textbf{Step $1$:} There is an exact sequence 
\begin{equation}\label{eq:exactSeq}
H^2(X, \mathbb{R}) \xrightarrow{i} H^2(M, \mathbb{R}) \xrightarrow{j} H^0(N) \xrightarrow{\delta} H^3(X, \mathbb{R}).
\end{equation}
In particular, for all $\alpha \in H^2(M, \mathbb{Z})$ we have $\sum_{i=1}^k \ev_i (\alpha)[N_i]=0 \in H_4(X, \mathbb{Z})$.\\

The sequence above and its exactness follow from the long exact sequence for the pair $(X,M)$, which yields the sequence $H^{\ast}(X,M) \rightarrow H^{\ast}(X, \mathbb{R}) \rightarrow H^{\ast}(M, \mathbb{R})$. Then, excision and Thom's isomorphism theorem gives $H^{\ast}(X,M) \cong H^{\ast-3}(N)$ and so the sequence \ref{eq:exactSeq}. The claim that $\sum_{i=1}^k \ev_i (\alpha)[N_i]$ vanishes for all $\alpha \in H^2(M, \mathbb{Z})$ is then immediate from this exact sequence as $\delta \circ j( \alpha )=0$.\\

\textbf{Step $2$}: Let $k \in \mathbb{Z}$, $\alpha \in H^2(M, \mathbb{Z})$ and $r_i: M \rightarrow \mathbb{R}^+$ denote the geodesic distance to $N_i$, for $i \in \lbrace 1, \ldots , k \rbrace$. Then, we shall prove that on $M$ there is a closed $2$-form $F'$, and a real valued function $\phi$ on $M$, such that
\begin{equation}\label{eq:MonEq0}
\ast (F' \wedge \psi) = d \phi,
\end{equation}
and $\lim_{r_i \rightarrow 0} r_i\phi = \ev_i([F'])= \ev_i(\alpha)$.\\

This part of the proof is motivated by Hitchin's work \cite{Hitchin99}. We start by noticing that $[N_{\alpha}]=\sum_{i=1}^k \ev_i (\alpha)[N_i]$ is homologous to zero by the first step. Hence, the harmonic representative of $PD_X[N_{\alpha}]=0$ is $0$ and we can solve the PDE
$$\Delta H = \sum_{i=1}^k \ev(\alpha_i)\delta_{N_i},$$
for a $4$-current $H$, which we identify with a $3$-form with distributional coefficients. Moreover, as $N$ is compact $\Delta d H=0$. Hence $dH$ vanishes as it is a global harmonic and exact $4$-form. Then we define $F'=d^*H$ (which is the connection $2$-form of the gerbe on the open set $M=X \backslash N$). Since $dF'=dd^*H=0$ on $M$, $F'$ is closed and we shall now check that $F'$ satisfies $\ast (F' \wedge \psi) = d \phi$, where $\phi$ is the function such that $\frac{\phi}{7} \varphi= \pi_1(H)$.\\
Recall that as $N$ is coassociative, $\varphi \vert_N =0$. Then for all $\eta \in \Omega^1(X)$,
$$\delta_{N_i} \wedge \varphi (\eta) = \int_{N_i} \eta \wedge \varphi =0.$$
This shows that $\delta_{N_i} \wedge \varphi=0$, or in other words $\pi_7(\delta_{N_i}) =0 \in \Lambda^3_7$. Hence, $\pi_7(\Delta H) = 0$, and as in a $G_2$-manifold the Laplacian preserves the type decomposition, $\pi_7(H)$ is harmonic. Moreover, as $X$ is irreducible, there can be no parallel $1$-forms, \cite{Bryant1989}, and the Bochner-formula implies that $\pi_7(H)=0$. As a consequence, the equation $dH=0$ turns into $d \pi_1(H) = - d \pi_{27}(H)$ and writing $\pi_1 (H) = -a \varphi$, for some function $a$ on $M$ (which extends to $X$ as a $7$-current), this is
\begin{equation}\label{dH27}
d\pi_{27}(H)=  da \wedge \varphi.
\end{equation}
Then $d^* \pi_{27}H = \pi_7 d^* \pi_{27} H + \pi_{14}d^* \pi_{27} H$, and using the identities $\pi_7 d^* \pi_{27} H = - \frac{1}{3} \ast (\ast ( \ast d \pi_{27} H \wedge \varphi ) \wedge \psi )$, and $\ast (\ast (d a \wedge \varphi) \wedge \varphi) = -4 d a$, together with equation \ref{dH27} gives
\begin{eqnarray}\nonumber
d^* H & = & \pi_{14}d^* \pi_{27} H -  \frac{1}{3} \ast \left( \ast \left( \ast d \pi_{27}H \wedge \varphi \right) \wedge \psi \right) - d^* (a \varphi) \\ \nonumber
& = &  \pi_{14}d^* \pi_{27} H +\frac{4}{3} \ast \left( d a \wedge \psi \right)  + \ast \left( d a \wedge \psi \right) \\ \nonumber
& = & \pi_{14} d^* \pi_{27} H +  \frac{7}{3}  \ast \left( d a \wedge \psi \right) .
\end{eqnarray}
At this point we define $\phi = 7a$, and recall that $F'=d^* H$. Then, as $\Omega^2_{14}$ is the kernel of wedging with $\psi$ and $\ast ( \ast (d \phi \wedge \psi ) \wedge \psi) = 3 d \phi$, we obtain
\begin{equation}
\ast \left( F' \wedge \psi \right) = d \phi,
\end{equation}
which proves our equation \ref{eq:MonEq0}. It remains to prove that the function $\phi$ has the claimed limiting behavior around each $N_i$. Moreover, such function is harmonic on $M$ and can be extended to $X$ as a current satisfying $\Delta \phi= \sum_{i=1}^k \ev_i(\alpha) \delta_{N_i} \wedge \psi$. It follows then that on $B_{\epsilon}(N_i)$,
$$\phi= \frac{\ev_i(\alpha)}{r_i}  +O(1).$$
The only thing left to check is that $\ev_i([F'])=\ev_i(\alpha)$, for all $i \in \lbrace 1,\ldots , k \rbrace$. This follows immediately from evaluating $[F']$ along the cycles generated by the fibers of $\mathbb{S}^2(N_i)$. For $x_i \in N_i$ the fiber above it is a two sphere $\mathbb{S}^2_{x_i}$, bounding a disk $D$ intersecting the zero section. Then, it follows from Stokes' theorem that
$$\int_{\mathbb{S}^2_{x_i}} F'= \int_D dF' = \ev_i(\alpha),$$
as $dF= \Delta H = \sum_{i=1}^k \ev_i(\alpha) \delta_{N_i}$.\\

For this $2$-form $F'$ obtained in step $2$ to define a connection on a line bundle bundle $L$ over $M$, with $c_1(L)=\alpha$, we would need $[F']= \alpha \in H^2(M, \mathbb{Z})$. This may not be the case in general. However, as we shall see in the next step, it is always possible to change $F'$ to another $2$-form $F$, in such a way that equation \ref{eq:MonEq0} still holds for $F$ instead of $F'$, and $[F]= \alpha $.\\

\textbf{Step $3$}: Let $\beta \in H^2(X, \mathbb{R})$, we prove that the harmonic representative $b$ of $\beta$ is such that $\pi_7(b)=0$, i.e. $b \wedge \psi=0$.\\

The proof is a consequence of $(X, \varphi)$ being irreducible, as in this case it can have no parallel $1$-forms. Since $g_{\varphi}$ is Ricci-flat, the B\"ochner formula on $1$-forms gives $\nabla^* \nabla = \Delta$ and since $X$ is compact there can be no harmonic $1$-forms also. This proves that $H^2_{7}(X, \mathbb{R})=0$ and so the harmonic representative of any cohomology class has no component along the standard $7$-dimensional $G_2$-representation.\\

\textbf{Step $4$}: We finish the proof by putting all the previous steps together. Let $\alpha \in H^2(M, \mathbb{Z})$ as in the hypothesis. Then, we construct $F'$ and $\phi$ using step $2$, these satisfy 
\begin{equation}\nonumber
\ast (F' \wedge \psi) = d \phi,
\end{equation}
with $\lim_{r_i \rightarrow 0} r_i\phi = \ev_i(\alpha)$, and $F'$ a closed $2$-form on $M$, such that $ev_i([F'])=\ev_i(\alpha)$. In other words, using the exact sequence in the first step the class $[F'] \in H^2(M, \mathbb{R})$ is such that $j([F'])=j(\alpha)$. And so by exactness $[F']-\alpha = i(\beta)$ for some $\beta \in H^2(X, \mathbb{R})$. By the third step, the harmonic representative $b$ of the class $\beta$ satisfies $b\wedge \psi =0$. Hence we define $F=F'+b$, which we immediately check satisfies
$$\ast (F \wedge \psi) = d \phi$$
and $[F]= \alpha \in H^2(M, \mathbb{Z})$. It then follows as an immediate application of the Poincar\'e lemma that $F$ is the curvature of a connection $A$ in a line bundle $L \rightarrow M$ with $c_1(L)=\alpha$. And the pair $(A, \Phi)$ is a monopole on $L$ which extends to a monopole with Dirac type singularities of charge $\ev_i(\alpha)$ along each $N_i$.

\begin{remark}
For any $(n_1,\ldots , n_k) \in \mathbb{Z}^k$ such that $\sum_{i=1}^k n_i [N_i] =0 \in H_4(X, \mathbb{Z})$, the exactness of the sequence in the first step of the previous proof shows that there does exist $\alpha \in H^2(M, \mathbb{Z})$ with $\ev_i(\alpha)=n_i$. Hence, we can construct a Dirac monopole with charge $n_i$ along $[N_i]$.
\end{remark}

\subsection*{Singular monopoles on $\mathbb{T}^4 \times \mathbb{S}^3$}\label{sec:Coclosed}

It is difficult to come up with an example of a non-Abelian singular monopole on a compact $G_2$-manifold. However, it is crucial to get some examples in order to test ideas. So far, the best we can do is a toy example on a compact manifold equipped a coclosed (but not closed $G_2$-structure).\\
Let $X=\mathbb{T}^4 \times \mathbb{S}^3$ and denote by $\lbrace d\theta^a \rbrace_{a=0}^{3}$ the standard coframing of the torus and by $\lbrace \eta^i \rbrace_{i=1}^{3}$ the usual $SU(2)$-invariant coclosed coframing of $\mathbb{S}^3$, i.e. $d \eta^i = -2 \epsilon_{ijk} \eta^{jk}$, where $\eta^{j} \wedge \eta^{k}=\eta^{jk}$. Then we shall define the $G_2$-structure
$$\varphi= \eta^{123} + \eta^1 \wedge \Omega^1 + \eta^2 \wedge \Omega^2 + \eta^3 \wedge \Omega^3,$$
where the $\Omega^i$'s form a basis for the anti-self-dual $2$-forms on $\mathbb{T}^4$, for concreteness take $\Omega^1 = d \theta^0 \wedge d \theta^1 - d \theta^2 \wedge d \theta^3$, $\Omega^2 = d \theta^0 \wedge d \theta^2 - d \theta^3 \wedge d \theta^1$ and $\Omega^3 = d \theta^0 \wedge d \theta^3 - d \theta^1 \wedge d \theta^2$. We can easily check that these $G_2$ structures can never be closed. However, it is also easy to see that the induced $4$-form
$$\psi= \theta^{0123} - \eta^{23} \wedge \Omega^1 - \eta^{31} \wedge \Omega^2 - \eta^{12} \wedge \Omega^3,$$
is closed if and only if the $\Omega_i$'s are closed, which indeed they are.

\begin{remark}
In fact, the above construction can be done more generally with $X=M^4 \times M^3$, where $\overline{M}^4$ is hyperk\"ahler and $M^3$ is any $3$-manifold (one can prove that any $3$-manifold admits a coclosed framing as I learned from Robert Bryant).\\
Also, we remark that there is an $h$-principle for coclosed $G_2$-structures, \cite{Crowley2012}, and any spin $7$-manifold admits one such.
\end{remark}

As the $G_2$-structure is only coclosed, the equations for coassociative submanifolds are overdetermined. However, in our example these do exist. In fact, any coassociative submanifold for this $G_2$-structure is of the form $\mathbb{T}^4 \times \lbrace p \rbrace$, where $p \in S^3$. Now we let $N= \mathbb{T}^4 \times \lbrace \infty \rbrace$ and $M= X \backslash N \cong \mathbb{T}^4 \times \mathbb{R}^3$, by stereographically projecting from $\lbrace \infty \rbrace$. Then, in \cite{Pauly98} the author constructs a monopole with gauge group $SU(2)$ on $\mathbb{S}^3$. This has a Dirac type singularity at $\lbrace \infty \rbrace \in \mathbb{S}^3$ and the Higgs field $\Phi$ vanishes in its antipodal point $\lbrace 0 \rbrace$. We pull this monopole back to $X$, then it follows from lemma \ref{lem:reduction} that we obtain a $G_2$-monopole $(A, \Phi)$ on $X$ with Dirac type singularities along $N= \mathbb{T}^4 \times \lbrace \infty \rbrace$ and $\Phi^{-1}(0)=  \mathbb{T}^4 \times \lbrace 0 \rbrace$. Moreover, let $r$ be the distance to $N$, then this Dirac monopole satisfies \cite{Pauly98} 
$$\lim_{r \rightarrow 0} r \vert \Phi \vert = \lim_{r \rightarrow 0} r^2 \vert F_A \vert=1.$$
We now interpret these singularities in terms of the rest of the discussion in this section. In the example under consideration, the moduli space of coassociative submanifolds is parametrized by $\mathbb{S}^3$. Recall that $N= \mathbb{T}^4 \times \lbrace \infty \rbrace$ and $M=X \backslash N \cong \mathbb{T}^4 \times \mathbb{R}^3$. Then, the bundle $P$ is pulled back from $\mathbb{R}^3$ and so is trivial. However, the normal sphere bundle $\mathbb{S}^2(N)$ over $N$ is diffeomorphic to $\mathbb{T}^4 \times \mathbb{S}^2$ and the bundle $\mathfrak{g}_P \vert_{\mathbb{S}^2(N)}\cong \underline{\mathbb{R}} \oplus H$, where $H$ is the Hopf bundle over $\mathbb{S}^2$. Our example of a $G_2$-monopole on $P$ with a charge $1$ Dirac type singularity along $N$ comes from lifting a singular monopole on $\mathbb{S}^3$ and making use of lemma \ref{lem:reduction}.\\
Denote by $p_1$ and $e$ the Pontryagin and Euler class of $\mathfrak{g}_P$ respectively. As $\mathfrak{g}_P$ is trivial these are both zero in $H^4(M,\mathbb{Z})$ and $H^3(M,\mathbb{Z})$ respectively. On the other hand, we have the Poincar\'e dual of the vanishing locus of a section of $\mathfrak{g}_P$, for example $PD[\Phi^{-1}(0)] \in H^3_{cs}(M, \mathbb{Z})$, where $\Phi$ is the Higgs field of the monopole. Notice that in this case $\Phi^{-1}(0)=\mathbb{T}^4 \times \lbrace 0 \rbrace$ is coassociative. Moreover, the classes $e=0 \in H^3(M, \mathbb{Z})$ and $PD[\Phi^{-1}(0)] \in H^3_{cs}(M,\mathbb{Z})$ are related as follows. In the exact sequence
$$H^2(\mathbb{S}^2(N) , \mathbb{Z}) \xrightarrow{i} H^3_{cs}(M, \mathbb{Z}) \xrightarrow{j} H^3(M, \mathbb{Z}), $$
the class $PD[\Phi^{-1}(0)]$ maps through $j$ to $e=0$, and so is in the kernel of $j$. By exactness, $PD[\Phi^{-1}(0)]$ is then determined by a class in $H^2(\mathbb{S}^2(N), \mathbb{Z})$, which as we have seen in this case is $c_1(H)$, i.e. the first Chern class of the pullback of the Hopf bundle over the two sphere around $\lbrace \infty \rbrace$.

%Hence $\Phi^{-1}(0)=\mathbb{T}^4 \times \lbrace 0 \rbrace$ which even though being zero in $H^3(M, \mathbb{Z})$ is nonzero in $H^3_{cs}(M, \mathbb{Z})$, we now look at this from two points of view:
%\begin{enumerate}
%\item In the exact sequence
%$$H^2(\mathbb{S}^2(N) , \mathbb{Z}) \xrightarrow{i} H^3_{cs}(M, \mathbb{Z}) \xrightarrow{j} H^3(M, \mathbb{Z}), $$
%the class $PD[\Phi^{-1}(0)]$ maps through $j$ to $e=0$, and so is in the kernel of $j$. By exactness, $PD[\Phi^{-1}(0)]$ is then determined by a class in $H^2(\mathbb{S}^2(N), \mathbb{Z})$, which as we have seen in this case is $c_1(H)$, i.e. the first Chern class of the pullback of the Hopf bundle over the two sphere around $\lbrace \infty \rbrace$.
%\item On the other hand, in the exact sequence
%$$H^2(N, \mathbb{Z}) \xrightarrow{i} H^3_{cs}(M, \mathbb{Z}) \xrightarrow{j} H^3(X, \mathbb{Z}), $$
%the class $PD[\Phi^{-1}(0)]$ is mapped to $PD_X[\Phi^{-1}(0)]=PD_X[N] \in H^3(X, \mathbb{Z})$, where now we use $PD_X$ to denote Poincar\'e duality in $X$. Also notice that any coassociative submanifold lies in that class.
%\end{enumerate}

\subsection*{The main problem for future work}

The example above illustrates the topological invariants involved in setting up the problem for monopoles on $(X, \varphi)$ with Dirac singularities along a coassociative $N$. We fix a class $\alpha \in H^2(\mathbb{S}^2(N) , \mathbb{Z})$, and a principal $SO(3)$-bundle $P \rightarrow M$, such that $\mathfrak{g}_P \vert_{\mathbb{S}^2(N)} \cong \underline{\mathbb{R}} \oplus L^2$, where $L$ is a complex line bundle with $c_1(L)=\alpha$. The author intends to come back to the problem of constructing monopoles on $P$ which extend to $X$ with Dirac type singularities along $N$.

\begin{bibdiv}
\begin{biblist}  

\bib{Acharya04}{article}{
    author = {Bobby S. Acharya and Sergei Gukov},
    title = {M theory and Singularities of Exceptional Holonomy Manifolds },
    year = {2004}
}

\bib{Bryant1989}{article}{
	Author = {R. L. Bryant and S. M. Salamon},
	Coden = {DUMJAO},
	Doi = {10.1215/S0012-7094-89-05839-0},
	Fjournal = {Duke Mathematical Journal},
	Issn = {0012-7094},
	Journal = {Duke Math. J.},
	Mrclass = {53C25 (53C57)},
	Mrnumber = {1016448 (90i:53055)},
	Mrreviewer = {Krzysztof Galicki},
	Number = {3},
	Pages = {829--850},
	Title = {On the construction of some complete metrics with exceptional holonomy},
	Url = {http://dx.doi.org/10.1215/S0012-7094-89-05839-0},
	Volume = {58},
	Year = {1989},
	Bdsk-Url-1 = {http://dx.doi.org/10.1215/S0012-7094-89-05839-0}}

\bib{Bryant2006}{article}{
	Author = {Bryant, R.~L.},
	Booktitle = {Proceedings of {G}\"okova {G}eometry-{T}opology {C}onference 2005},
	Mrclass = {53C10 (53C29)},
	Mrnumber = {2282011 (2007k:53019)},
	Mrreviewer = {Simon G. Chiossi},
	Pages = {75--109},
	Publisher = {G\"okova Geometry/Topology Conference (GGT), G\"okova},
	Title = {{Some remarks on $\rG_2$--structures}},
	Year = {2006}}
	
\bib{Crowley2012}{article}{
	Author = {D. Crowley and J. Nordstr{\"o}m},
	Eprint = {http://arxiv.org/abs/1211.0269v2},
	Title = {{A new invariant of $\rG_2$--structures}},
	Year = {2013}}
	
\bib{Donaldson2009}{incollection}{
	Author = {S.~K. Donaldson and E.~P. Segal},
	Booktitle = {Surveys in differential geometry. Volume XVI. Geometry of special holonomy and related topics},
	Pages = {1--41},
	Publisher = {Int. Press, Somerville, MA},
	Series = {Surv. Differ. Geom.},
	Title = {Gauge theory in higher dimensions, {II}},
	Volume = {16},
	Year = {2011}
	}

\bib{Hitchin99}{article}{
      author={{Hitchin}, N.},
       title={{Lectures on Special Lagrangian Submanifolds}},
        date={1999-07},
     journal={ArXiv Mathematics e-prints},
      eprint={math/9907034},
}

\bib{Oliveira2014}{article}{
      author={Oliveira, Goncalo},
       title={Monopoles on the Bryant-Salamon $G_2$-manifolds},
        date={2014},
        ISSN={0393-0440},
     journal={Journal of Geometry and Physics},
      volume={86},
      number={},
       pages={599 \ndash  632},
  url={http://www.sciencedirect.com/science/article/pii/S0393044014002204},
}

\bib{Pauly98}{article}{
year={1998},
issn={0025-5831},
journal={Mathematische Annalen},
volume={311},
number={1},
doi={10.1007/s002080050180},
title={Monopole moduli spaces for compact 3-manifolds},
url={http://dx.doi.org/10.1007/s002080050180},
publisher={Springer-Verlag},
keywords={Mathematics Subject Classification (1991): 58D27, 81T13, 53C05},
author={Pauly, Marc},
pages={125-146},
language={English}
}

\bib{Wang1958}{article}{
journal = {Nagoya Math. J.},
author = {Wang, H. C.},
journal = {Nagoya Mathematical Journal},
pages = {1--19},
publisher = {Duke University Press},
title = {On invariant connections over a principal fibre bundle},
url = {http://projecteuclid.org/euclid.nmj/1118800027},
volume = {13},
year = {1958}
}

\bib{Wang2016}{article}{
   author = {{Wang}, Y.},
    title = {Deformation of singular connections I: $G\_{2}-$instantons with point singularities},
  journal = {ArXiv e-prints},
   eprint = {1602.05701},
     year = {2016},
   adsurl = {http://adsabs.harvard.edu/abs/2016arXiv160205701W},
  adsnote = {Provided by the SAO/NASA Astrophysics Data System}
}

\end{biblist}
\end{bibdiv}=======================================================

\end{document}